\providecommand{\U}[1]{\protect\rule{.1in}{.1in}}
\newtheorem{theorem}{Theorem}[section]
\newtheorem{proposition}[theorem]{Proposition}
\newtheorem{corollary}[theorem]{Corollary}
\newtheorem{example}[theorem]{Example}
\newtheorem{examples}[theorem]{Examples}
\newtheorem{remarks}[theorem]{Remarks}
\newtheorem{lemma}[theorem]{Lemma}
\newtheorem{final remark}[theorem]{Final Remark}
\newtheorem{definition}[theorem]{Definition}
\newtheorem{definitions}[theorem]{Definitions}
\newcommand{\cvf}{\overset{\omega}{\rightarrow}}
\newcommand {\R}{\mathbb{R}}
\newcommand {\N} {\mathbb{N}}
\newcommand{\norma}[1]{\| #1 \|}
\newcommand{\conj}[2]{\left \{ {#1} \, : \, {#2} \right \}}
\begin{document}

\title{Compact positive multilinear operators on Banach lattices}
\author{Geraldo Botelho\thanks{Supported by Fapemig grants RED-00133-21 and APQ-01853-23.} ~and Vinícius C. C. Miranda\thanks{Supported by FAPESP grant 2023/12916-1 and Fapemig grant APQ-01853-23.\newline 2020 Mathematics Subject Classification: 46B42, 46B50, 46G25, 47B07, 47H60.
\newline
Keywords: Banach lattices, compact multilinear operator, $M$-weakly compact linear/multilinear operators. }}
\date{}
\maketitle

\begin{abstract} Let $1 < p_1, \ldots, p_n < \infty, 1\leq q < \infty$ be such that $\sum\limits_{i=1}^n \frac{1}{p_i} < \frac{1}{q}$ and let $\mu_1, \ldots, \mu_n, \nu$ be arbitrary measures. Generalizing known linear and multilinear results, we prove that all positive $n$-linear operators from $\ell_{p_1} \times \cdots \times \ell_{p_n}$ to $L_q(\nu)$ and from $L_{p_1}(\mu_1) \times \cdots \times L_{p_1}(\mu_n)$ to $\ell_{q}$ are compact. This result, along with other related ones concerning free Banach lattices, shall emerge as consequences of some facts we prove about $M$-weakly compact multilinear operators on Banach lattices.
\end{abstract}

\section{Introduction}

A long tradition began in 1936 when Pitt \cite{pitt} proved that bounded linear operators from $\ell_p$ to $\ell_q$ are compact whenever $q < p$. Stepping  into the nonlinear environment, Pe\l{}czy\'nski \cite{pel} proved in 1957 that continuous $n$-homogeneous polynomials from $\ell_p$ to $\ell_q$ are compact if $nq < p$. After several related results, see, e.g. \cite{veronacho, raqueljesus, raqueljesus2}, in 1997 Alencar and Floret \cite{alencarfloret} proved the multilinear case: every continuous $n$-linear operator from $\ell_{p_1} \times \cdots \times \ell_{p_n}$ to $\ell_q$ is compact whenever $\sum\limits_{i=1}^n \frac{1}{p_i} < \frac{1}{q}$. Using Banach lattices techniques, Chen and Wickstead \cite{chenwick} proved in 1998 that positive linear operators from $\ell_p$ to $L_q(\nu)$ and from $L_p(\mu)$ to $\ell_q$ are compact if $q < p$. The main result of this paper, stated in the Abstract, can be regarded as a multilinear extension of this latter result and as a lattice counterpart of the former ones.

Next, we present 
two examples, one showing that the posivitity of the operator is essential, and the other one showing that atomicity is essential either in the domain spaces or in the target space.

\begin{examples}\rm \label{exintro} {\rm (1)} Using Holder's inequality and Khintchine’s inequalities \cite[Theorem 6.2.2]{albiac}, it is easy to see that $A((a_j)_j,(b_j)_j) = \sum\limits_{j=1}^\infty a_j b_j r_j$, where $(r_n)_n$ denotes the sequence of Rademacher functions, defines a non-compact continuous bilinear operator from $\ell_4 \times \ell_4$ to
$L_1([0,1])$. 

{\rm (2)} By \cite[Theorem 4.9]{chenwick}, there exists a positive non-compact operator $T: L_2([0,1]) \to L_1([0,1])$. Let $\varphi : \ell_3 \to \R$ be a positive linear functional functional. Then, $B(f, a) = \varphi(a) T(f)$ defines a non-compact positive bilinear operator from $L_2([0,1]) \times \ell_3$ to $L_1([0,1])$. 
\end{examples}

In our way to prove the main result we had to consider multilinear generalizations of the classical class of $M$-weakly compact linear operators. Actually, the main results of the paper are consequences of the results we prove for $M$-weakly compact multilinear operators.

In Section 2 we recall some basic facts about indices of Banach lattices and we prove some results that shall be needed later. The results about $M$-weakly compact multilinear operators are proved in Section 3. The main results, including the one stated in the Abstract and results concerning free Banach lattices, are proved in Section 4.

By $E^+$ we denote the positive cone of the Banach lattice $E$ and by $B_X$ the closed unit ball of the Banach space $X$. Throughout the paper, all measures are positive.

Given Banach spaces $X_1, \dots, X_n$ and $Y$, the Banach space of all continuous $n$-linear operators $A: X_1 \times \cdots \times X_n \to Y$ is denoted by $\mathcal{L}(X_1, \dots, X_n; Y)$. An operator $A \in \mathcal{L}(X_1, \dots, X_n; Y)$ is compact if $A(B_{E_1}\times \cdots \times B_{E_n})$ is a relatively compact subset of $Y$. Compact multilinear operators started being studied in   Pe\l{}czy\'nski \cite{pel}. 
For given Banach lattices $E_1, \dots, E_n$ and $F$, an $n$-linear operator $A: E_1 \times \cdots \times E_n \to F$ is said to be positive if $A(x_1, \dots, x_n) \geq 0$ for all $x_1 \in E_1^+, \dots, x_n \in E_n^+$. It is a well-known fact that $|A(x_1, \dots, x_n)| \leq A(|x_1|, \dots, |x_n|)$ for a positive $n$-linear operator $A: E_1 \times \cdots \times E_n \to F$ and all $x_1 \in E_1, \dots, x_n \in E_n$. The difference of two positive $n$-linear operators is called a regular $n$-linear operator, and the set of all regular $n$-linear operators from $E_1 \times \cdots \times E_n $ into $F$ is denoted by $\mathcal{L}^r(E_1, \dots, E_n; F)$. It is well known that positive (hence regular) multilinear operators are automatically continuous. Whenever $F$ is Dedekind complete, $\mathcal{L}^r(E_1, \dots, E_n; F)$ is a Banach lattice with the regular norm $\norma{A}_r := \norma{|A|}$, where $|A|$ denotes the absolute value of the regular $n$-linear operator $A: E_1 \times \cdots \times E_n \to F$. 

For (spaces of) continuous multilinear operators between Banach spaces we refer to \cite{dineen}, for (spaces of) regular multilinear operators between Banach lattices we refer to \cite{bubuskes, loane}, and for the basic theory of Banach lattices we refer to \cite{alip, meyer}.

\section{Preliminary results}

The following terminology was introduced by P. G. Dodds \cite{dodds77}:


\begin{definitions}\rm
    Let $E$ be a Banach lattice and let $1 \leq p \leq \infty$ be given. \\
    {\rm (1)} $E$ is said to have the {\it $\ell_p$-composition property} if every positive disjoint sequence $(x_n)_n$ in $E$ such that $(\norma{x_n})_n \in \ell_p$ satisfies $\displaystyle \sup_{n \in \N}\norma{x_1 + \cdots + x_n} < \infty$. The {\it lower index} $s(E)$ of $E$ is defined by
    $$ s(E) = \sup \conj{p \geq 1}{E \text{ has the $\ell_p$-composition property}}. $$
    {\rm (2)} $E$ is said to have the {\it $\ell_p$-decomposition property} if every positive disjoint order bounded sequence $(x_n)_n$ in $E$ satisfies $(\norma{x_n})_n \in \ell_p$.
    The {\it upper index} $\sigma(E)$ of E is defined by
    $$ \sigma(E) = \inf \conj{p \geq 1}{E \text{ has the $\ell_p$-decomposition property}}. $$
\end{definitions}

Next, we recall some properties related to the notions defined above.

\begin{remarks}\rm  \label{rema}
    {\rm (1)} Every Banach lattice has the $\ell_1$-composition property \cite[p. 74]{dodds77} and the $\ell_\infty$-decomposition property \cite[p. 75]{dodds77}. \\
    {\rm (2)} If $E$ has the $\ell_p$-composition property for some $p > 1$, then $E$ also has the $\ell_r$-composition property for every $1 \leq r \leq p$ \cite[p. 74]{dodds77}. On the other hand, if $E$ has the $\ell_p$-decomposition property for some $1 \leq p < \infty$, then $E$ also has the $\ell_r$-decomposition property for every $\infty \geq r \geq p$ \cite[p. 75]{dodds77}. These observations show that the lower and the upper indices are well defined. \\
    {\rm (3)} If $E$ has the $\ell_p$-composition property for some $p > 1$, then $E^*$ has order continuous norm \cite[Theorem 2.3]{dodds77}. \\
    {\rm (4)} If $E$ has the $\ell_p$-decomposition property for some $1 \leq p < \infty$ and $E$ has the $\ell_r$-composition property for some $1 < r \leq \infty$, then $E$ is reflexive \cite[Corollary 2.6]{dodds77}. \\
    {\rm (5)} Suppose that $\frac{1}{p} + \frac{1}{q} =  1$. Then, $E$ has the $\ell_p$-decomposition property if and only if $E^*$ has the $\ell_q$-composition property \cite[Theorem 2.14]{dodds77}. Also, $E$ has the $\ell_p$-composition property if and only if $E^*$ has the $\ell_q$-decomposition property \cite[p. 315]{doddsfremlin}.  \\
    {\rm (6)} It follows from (5) and (3) that if $E$ has the $\ell_p$-decomposition property, then $E^{**}$ has order continuous norm. Since $E$ is a closed sublattice of $E^{**}$, we get that $E$ also has order continuous norm. \\
    {\rm (7)} It follows from \cite[p. 314]{doddsfremlin} that for a Banach lattice $E$, the following are equivalent:

    {\rm (i)} $E$ has the $\ell_p$-composition property ($1 < p < \infty$).

    {\rm (ii)} For every disjoint norm-bounded sequence $(x_n)_n$ in $ E$ there exists a bounded linear operator $S:\ell_p \to E$ such that $S(e_n) = x_n$ for every $n \in \N$.

    {\rm (iii)} There exists a constant $M > 0$ such that $\left\|\sum\limits_{i=1}^n x_i\right\| \leq M \norma{(x_1, \dots, x_n)}_p$ holds for every finite disjoint subset $\{x_1, \dots, x_n \}$ of $E$.

    \noindent Item (iii) above coincides with the definition of the so-called {\it strong $\ell_p$-composition property} from \cite[Definition 2.7]{dodds77} and \cite[Definition 1.2]{schep}. See also \cite[Definition 1.f.4]{lind}.\\
   {\rm (8)} For every Banach lattice $E$, $E$ has the $\ell_p$-decomposition property if and only if $E$ has the so-called {\it strong $\ell_p$-decomposition property}, meaning that there exists a constant $M > 0$ such that $\norma{(x_1, \dots, x_n)}_p \leq M \left\|\sum\limits_{i=1}^n x_i\right\|$ holds for every finite disjoint subset $\{x_1, \dots, x_n \}$ of $E$ \cite[p. 78]{dodds77}. See also \cite[Definition 1.1]{schep} and \cite[Definition 1.f.4]{lind}.   \\
   {\rm (9)} For every Banach lattice $E$, it holds that $1 \leq s(E) \leq \sigma(E) \leq \infty$; this justifies why $s(E)$ and $\sigma(E)$ are called, respectively, the lower index and the upper index of $E$. Moreover, $s(E) = 1$ and $\sigma(E) = \infty$ for every finite-dimensional $E$ \cite[Theorem 3.2]{dodds77}. \\
   {\rm (10)} Items (7) and (8) above yield that, for every $E$,
   $$ s(E) = \sup \conj{p \geq 1}{E \text{ has the strong $\ell_p$-composition property}} $$
   and
   $$ \sigma(E) = \inf \conj{p \geq 1}{E \text{ has the strong $\ell_p$-decomposition property}}. $$
    {\rm (11)} It follows from \cite[p. 314]{doddsfremlin} that $\displaystyle \frac{1}{\sigma(E)} + \frac{1}{s(E^*)} = \frac{1}{\sigma(E^*)} + \frac{1}{s(E)} = 1$ holds for every Banach lattice $E$.
\end{remarks}

We recall that a Banach lattice $E$ is said to be an {\it abstract $L_p$-space} ($1 \leq p < \infty$) if $\norma{x+y}^p = \norma{x}^p + \norma{y}^p$ holds for all positive disjoint elements $x, y \in E$. In particular, the norm of $E$ is $p$-additive.
It is well known that $L_p(\mu) :=L_p(\Omega, \Sigma,\mu)$ is an abstract $L_p$-space for every measure space $(\Omega, \Sigma,\mu)$. Conversely, if $E$ is an abstract $L_p$-space, then there exists a topological Hausdorff space $X$ and a Baire measure $\mu$ on $X$ such that $E$ is isometrically isomorphic to $L_p(\mu)$ \cite[2.7.1]{meyer}. More details can be found in  \cite[Section 2.7]{meyer}. We also recall that a Banach lattice $E$ is an \textit{abstract $M$-space} ($AM$-space, in short) if $\norma{x \vee y} = \max\{\norma{x}, \norma{y}\}$ for all $x,y \geq 0$ in $E$. In this case the norm of $E$ is called an abstract $M$-norm.
The following characterizations were proven in \cite[Section 4]{dodds77}:

\begin{proposition} \label{teolp}
    {\rm (1)} A Banach lattice $E$ has the $\ell_1$-decomposition property if and only if has an equivalent abstract $L_1$-space norm. \\
    {\rm (2)} Let $1 \leq p < \infty$. A Banach lattice $E$ has the $\ell_p$-decomposition property and the $\ell_p$-composition property if and only if $E$ has an equivalent abstract $L_p$-space norm. \\
    {\rm (3)} Let $E$ be a $\sigma$-Dedekind complete Banach lattice (or equivalently, $E$ has the so-called principal projection property \cite[p. 18]{meyer}). Then, $E$ has the $\ell_\infty$-composition property if and only if $E$ has an equivalent abstract $M$-space norm.
\end{proposition}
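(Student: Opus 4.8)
The plan is to dispatch the three ``if'' implications at once and then produce explicit equivalent renormings for the ``only if'' implications. For sufficiency, suppose $E$ has an equivalent abstract $L_p$-norm $\norma{\cdot}_0$, say $C^{-1}\norma{x}\le\norma{x}_0\le C\norma{x}$. For a finite disjoint family $x_1,\dots,x_n\in E^+$ the $p$-additivity of $\norma{\cdot}_0$ on disjoint elements gives $\norma{\sum_i x_i}_0=\left(\sum_i\norma{x_i}_0^p\right)^{1/p}$, whence both $\left(\sum_i\norma{x_i}^p\right)^{1/p}\le C^2\norma{\sum_i x_i}$ and $\norma{\sum_i x_i}\le C^2\left(\sum_i\norma{x_i}^p\right)^{1/p}$; by Remarks~\ref{rema}(7)--(8) this is exactly the strong $\ell_p$-decomposition and $\ell_p$-composition properties (for $p=1$ the latter is automatic, Remark~\ref{rema}(1)). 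The sufficiency in (3) is the same computation with the identity $\norma{x\vee y}_0=\max\{\norma{x}_0,\norma{y}_0\}$ in place of $p$-additivity.

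For (1), and as a first step towards (2), I would set, for $x\in E$,
$$ N_p(x):=\left(\sup\left\{\sum_{i=1}^n\norma{x_i}^p : n\in\N,\ x_1,\dots,x_n\in E^+\text{ pairwise disjoint},\ \sum_{i=1}^n x_i\le|x|\right\}\right)^{1/p}. $$
The strong $\ell_p$-decomposition property (Remark~\ref{rema}(8)) makes the supremum finite and yields $\norma{x}\le N_p(x)\le M\norma{x}$, so $N_p$ is finite-valued and equivalent to $\norma{\cdot}$. Positive homogeneity and monotonicity in the lattice order are immediate from the definition, and the triangle inequality follows from the Riesz decomposition property: given disjoint $z_1,\dots,z_m\in E^+$ with $\sum_k z_k\le|x+y|\le|x|+|y|$, write $z_k=z_k'+z_k''$ with $0\le z_k'\le|x|$ and $0\le z_k''\le|y|$; then $\{z_k'\}$ and $\{z_k''\}$ are again disjoint with $\sum_k z_k'\le|x|$ and $\sum_k z_k''\le|y|$, and Minkowski's inequality in $\ell_p$ gives $\left(\sum_k\norma{z_k}^p\right)^{1/p}\le\left(\sum_k\norma{z_k'}^p\right)^{1/p}+\left(\sum_k\norma{z_k''}^p\right)^{1/p}\le N_p(x)+N_p(y)$. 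Combining decompositions of $x$ and of $y$ shows $N_p(x)^p+N_p(y)^p\le N_p(x+y)^p$ whenever $x\wedge y=0$. For $p=1$ the reverse inequality $N_1(x+y)\le N_1(x)+N_1(y)$ is precisely the triangle inequality just proved, so $N_1$ is additive on disjoint positive elements; hence $x\mapsto N_1(|x|)$ is an equivalent abstract $L_1$-norm and (1) follows.

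For (2) with $p>1$ the same estimate only gives $N_p(x+y)^p\le M^p\left(N_p(x)^p+N_p(y)^p\right)$, because the step that would be needed for exact $p$-additivity, namely $\norma{z_k}^p\le\norma{z_k'}^p+\norma{z_k''}^p$ for the disjoint pair $z_k',z_k''$, is the strong $\ell_p$-composition inequality with constant $1$ and fails in general. Thus $N_p$ is an equivalent lattice norm but usually not an abstract $L_p$-norm, and the sufficiency in (2) needs the full strength of both hypotheses, along the lines of \cite[Section~4]{dodds77}. Concretely: by Remark~\ref{rema}(6) the $\ell_p$-decomposition property forces $E$ to be order continuous; writing $E$ as a band direct sum of bands each possessing a weak order unit, each such band $F$ is lattice isomorphic to an ideal satisfying $L_\infty(\mu)\subseteq F\subseteq L_1(\mu)$ for a probability $\mu$, and the upper and lower $\ell_p$-estimates force the norm on $F$ to be equivalent to an $L_p(\nu)$-norm (a Maharam-type argument identifying $A\mapsto\norma{\chi_A}_F^p$, up to multiplicative constants, with a measure $\nu$ on the underlying $\sigma$-algebra); reassembling the bands by an $\ell_p$-direct sum, which respects abstract $L_p$-norms, yields the desired equivalent abstract $L_p$-norm on $E$. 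I expect this Maharam-type identification of the norm on each band to be the main obstacle; alternatively one may quote the known representation of Banach lattices satisfying simultaneously an upper and a lower $p$-estimate.

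Finally, I would obtain the remaining implication in (3) from (1) by duality, in the framework of \cite[Section~4]{dodds77}, where the $\sigma$-Dedekind completeness is used. If $E$ has the $\ell_\infty$-composition property, then by Remark~\ref{rema}(5) with $p=\infty$, $q=1$, the dual $E^*$ has the $\ell_1$-decomposition property, so by part (1) it carries an equivalent abstract $L_1$-norm $\norma{\cdot}_1$. The dual norm $\norma{\cdot}_1^*$ on $E^{**}$ is then an equivalent abstract $M$-norm, and since $E$ sits as a closed sublattice of $E^{**}$ under the canonical embedding, the restriction of $\norma{\cdot}_1^*$ to $E$ is an equivalent abstract $M$-norm on $E$; the converse is the sufficiency already noted, which completes the proof.
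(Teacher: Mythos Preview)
The paper does not give its own proof of this proposition: it is stated as a quotation of results from \cite[Section~4]{dodds77}, with no argument supplied. So there is nothing in the paper to compare your attempt against; I can only assess your proposal on its own merits.

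Your treatment of the ``if'' directions and of part~(1) is clean and correct: the functional $N_1$ is the standard renorming, and your use of Riesz decomposition to get the triangle inequality together with the combination-of-decompositions argument for additivity on disjoint elements is exactly the right mechanism.

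For part~(2) with $p>1$ you correctly identify that $N_p$ alone fails to be $p$-additive and that both hypotheses are genuinely needed, but what you offer beyond that is a sketch rather than a proof. The ``Maharam-type argument identifying $A\mapsto\norma{\chi_A}_F^p$ with a measure'' is precisely the nontrivial content of Dodds's original argument, and you have not carried it out; you yourself flag it as ``the main obstacle''. So (2) is not proved here---you have reduced it to a citation, which is in effect what the paper does too.

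For part~(3) your duality argument is attractive, but notice that as written it never uses the $\sigma$-Dedekind completeness hypothesis: you invoke Remark~\ref{rema}(5) (stated there without any completeness assumption), apply part~(1) to $E^*$, dualise to get an $M$-norm on $E^{**}$, and restrict to the sublattice $E$. Either the hypothesis is genuinely superfluous---in which case you should say so explicitly and check that every step, in particular the $p=\infty$ case of Remark~\ref{rema}(5), really holds without it---or there is a point where it enters and you have glossed over it. As it stands, the mismatch between the stated hypothesis and your argument is a gap that needs to be addressed.
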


As a consequence of Proposition \ref{teolp}, we have $s(L_p(\mu)) = \sigma(L_p(\mu)) = p$ for every $1 \leq p < \infty$ and every measure $\mu$, and  $\sigma(E) = s(E) = \infty$ for every $\sigma$-Dedekind complete $AM$-space $E$. Let us see one more example:

\begin{example} \label{exindices}\rm  Let $F$ be a Lorentz sequence space with a lower $2$-estimate, or equivalently, with the $\ell_2$-decomposition property, that is not $2$-concave (see, e.g., \cite[Example 1.f.19]{lind}). Also, recall that $F$ is a Banach lattice with the order induced by
its $1$-unconditional basis. We claim that $\sigma(F) = 2$. Indeed, since $F$ has the $\ell_2$-decomposition property, $\sigma(F) \leq 2$. If $F$ has the $\ell_r$-decomposition property for some $r < 2$, it follows from \cite[Theorem 1.f.7]{lind} that $F$ is $q$-concave for every $q \in (r, \infty)$, which is a contradiction because $F$ fails to be $2$-concave. Thus,
$\sigma(F) = 2$.
\end{example}

The following notion was introduced by Pełczy\'nski \cite{pel}.
For $0 \leq \alpha < 1$, a sequence $(x_n)_n$ in a Banach space is said to be $\tau_\alpha$-convergent to $0$ if there exists $c> 0$ such that
$$ \left\|\sum_{n \in B} x_n \right\| \leq c |B|^\alpha $$
for every finite subset $B \subset \N$, where $|B|$ denotes the cardinality of $B$. The application of
\cite[Main Theorem]{alencarfloret}  we prove next shall be needed later.


\begin{proposition}\label{lema2}
    Let $E_1, \dots, E_n$ and $F$ be Banach lattices and let $A \in \mathcal{L}(E_1, \dots, E_n; F)$ be given. Suppose that, for each $i=1, \dots, n$, $E_i$ has the $\ell_{p_i}$-composition property ($1 < p_i < \infty$). Then, for all norm bounded disjoint sequences $(x_1^k)_k$  in $E_1, \dots, (x_n^k)_k$ in $ E_k$, the sequence $(A(x_1^k, \dots, x_n^k))_k$ is $\tau_{\alpha}$-convergent to $0$ in $F$, where $\alpha = \sum\limits_{i=1}^n \frac{1}{p_i}$. In particular, $(A(x_1^k, \dots, x_n^k))_k$ is a weakly null sequence in $F$.
\end{proposition}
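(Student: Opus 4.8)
The plan is to reduce the statement to the already-cited Main Theorem of \cite{alencarfloret}, which handles the case in which the domain factors are exactly the sequence spaces $\ell_{p_i}$, by \emph{factoring} the given disjoint sequences in the abstract lattices $E_i$ through $\ell_{p_i}$. The tool for this factorization is Remark \ref{rema}(7)(ii): since $E_i$ has the $\ell_{p_i}$-composition property and $(x_i^k)_k$ is a disjoint norm-bounded sequence in $E_i$, there is a bounded linear operator $S_i \colon \ell_{p_i} \to E_i$ with $S_i(e_k) = x_i^k$ for every $k \in \N$.

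With the $S_i$ in hand, I would form the $n$-linear operator
$$ B \colon \ell_{p_1} \times \cdots \times \ell_{p_n} \to F, \qquad B\bigl(\xi^{(1)}, \dots, \xi^{(n)}\bigr) := A\bigl(S_1 \xi^{(1)}, \dots, S_n \xi^{(n)}\bigr), $$
which is continuous and $n$-linear because $A$ and all the $S_i$ are bounded. Applying \cite[Main Theorem]{alencarfloret} to $B$ yields that the diagonal sequence $\bigl(B(e_k, \dots, e_k)\bigr)_k$ is $\tau_\alpha$-convergent to $0$ in $F$ with $\alpha = \sum_{i=1}^n \frac{1}{p_i}$. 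Since $B(e_k, \dots, e_k) = A(S_1 e_k, \dots, S_n e_k) = A(x_1^k, \dots, x_n^k)$, this is exactly the desired conclusion.

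For the final assertion, I would argue directly that a $\tau_\alpha$-convergent to $0$ sequence $(y_k)_k$ with $0 \le \alpha < 1$ is weakly null: given $\varphi \in F^*$ with $\varphi(y_k) \not\to 0$, pass to a subsequence with $|\varphi(y_{k_j})| \ge \varepsilon$ for all $j$ and some $\varepsilon > 0$; among the first $m$ of these indices, the ones on which $\varphi$ is positive and the ones on which it is negative partition the set (no zeros, by the choice of $\varepsilon$), so one of the two parts $B$ has $|B| \ge m/2$; summing the corresponding $y_k$ and combining $|\varphi(z)| \le \|\varphi\|\,\|z\|$ with the $\tau_\alpha$-estimate gives $\tfrac{m}{2}\varepsilon \le c\|\varphi\|\, m^\alpha$, impossible for large $m$ since $1-\alpha>0$.

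I do not expect a serious obstacle: the mathematical content is essentially the factorization observation plus the cited theorem. The points that require care are (i) checking that Remark \ref{rema}(7)(ii) may be invoked for our not-necessarily-positive disjoint norm-bounded sequences, and (ii) verifying that the exponent produced by \cite[Main Theorem]{alencarfloret} is precisely $\sum_{i=1}^n \frac{1}{p_i}$; the weak-nullity conclusion, in turn, tacitly uses $\alpha < 1$, which holds under the running hypotheses on the exponents.
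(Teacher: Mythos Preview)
Your proposal is correct and follows essentially the same route as the paper: both use Remark~\ref{rema}(7) to obtain operators $S_i\colon\ell_{p_i}\to E_i$ sending $e_k$ to $x_i^k$, and both then invoke \cite[Main Theorem]{alencarfloret}. The only cosmetic difference is that the paper first observes the upper $p_i$-estimate $\|\sum_k a_i^k x_i^k\|\le\|S_i\|\,\|(a_i^k)_k\|_{p_i}$ to deduce directly that each $(x_i^k)_k$ is $\tau_{1/p_i}$-convergent in $E_i$ and applies the Main Theorem to $A$ itself, whereas you compose to form $B$ and apply the theorem on the $\ell_{p_i}$-side; for the weak-nullity clause the paper simply cites \cite[2.2]{alencarfloret} rather than arguing directly.
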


\begin{proof}
    Let  $(x_1^k)_k \subset E_1, \dots, (x_n^k)_k \subset E_k$ be norm bounded disjoint sequences. For each $i = 1, \dots, n$, since $E_i$ has the $\ell_{p_i}$-composition property, there exists a bounded linear operator $S_i: \ell_{p_i} \to E_i$ such that $S_i(e_k) = x_i^k$ for every $k \in \N$ [Remark \ref{rema}(7)]. Hence,
    $$ \left\|\sum_{k=1}^\infty a_i^k x_i^k\right\| = \left\|\sum_{k=1}^\infty a_i^kS_i(e_k)\right\| = \left\|S_i\left(\sum_{k=1}^\infty a_i^k e_i^k\right)\right\| \leq \norma{S_i} \!\cdot\!\norma{(a_i^k)_k}_{p_i}  $$
    holds for every $(a_i^k)_k \in \ell_{p_i}$ and each $i = 1, \dots, n$. This proves that each $(x_i^k)_k$ has the so-called upper $p$-estimate  \cite[Proposition 2.2]{alencarfloret}, therefore $(x_i^k)_k$ is $\tau_{1/p_i}$-convergent to $0$ for every $i = 1, \dots, n$ \cite[2.2]{alencarfloret}. By \cite[Main Theorem]{alencarfloret}, we conclude that $(A(x_1^k, \dots, x_n^k))_k$ is $\tau_\alpha$-convergent to $0$ in $F$ for where $\alpha = \sum\limits_{i=1}^n \frac{1}{p_i}$. For the second statement, see \cite[2.2]{alencarfloret}.
\end{proof}

We conclude this section with one more result that will be needed in the next section.

\begin{lemma} \label{lema3}
    Let $E_1, \dots, E_n$ and $F$ be Banach lattices such that $ \sum\limits_{i=1}^n \frac{1}{s(E_i)} < 1$. Then, there exist $r_1 \leq s(E_1), \dots, r_n \leq s(E_n)$ such that each $E_i$ has the $\ell_{r_i}$-composition property and $ \sum\limits_{i=1}^n \frac{1}{r_i} < 1$.
\end{lemma}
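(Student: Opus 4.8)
The plan is elementary, the whole point being that $s(E_i)$ is defined as a \emph{supremum} which need not be attained, while every index strictly below it \emph{is} attained; so it suffices to spend a little room below each $s(E_i)$ without exhausting the budget $\delta := 1 - \sum_{i=1}^n \frac{1}{s(E_i)}$, which is strictly positive by hypothesis. Note first that $\frac{1}{s(E_i)} \le \sum_{j=1}^n \frac{1}{s(E_j)} < 1$, so $s(E_i) > 1$ for every $i$.

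Fix $i \in \{1,\dots,n\}$. By Remark \ref{rema}(1) every Banach lattice has the $\ell_1$-composition property, and by Remark \ref{rema}(2) the $\ell_p$-composition property for some $p>1$ forces the $\ell_r$-composition property for all $1 \le r \le p$. Hence $\{p \ge 1 : E_i \text{ has the } \ell_p\text{-composition property}\}$ is an interval whose supremum is $s(E_i)$, and in particular $E_i$ has the $\ell_r$-composition property for every $r$ with $1 \le r < s(E_i)$. So as soon as I choose $r_i$ with $1 < r_i < s(E_i)$, both requirements ``$r_i \le s(E_i)$'' and ``$E_i$ has the $\ell_{r_i}$-composition property'' hold automatically.

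It remains to pick the $r_i$'s with $\sum_i \frac{1}{r_i} < 1$. Since $t \mapsto \frac1t$ is continuous and strictly decreasing with limit $\frac{1}{s(E_i)}$ as $t \to s(E_i)^-$ (reading $\frac1\infty = 0$), I can choose, for each $i$, a number $r_i \in (1, s(E_i))$ with $\frac{1}{r_i} < \frac{1}{s(E_i)} + \frac{\delta}{n}$; concretely one may take $r_i$ given by $\frac{1}{r_i} = \frac{1}{s(E_i)} + \frac{\delta}{2n}$ when $s(E_i) < \infty$ (a quick check using $\delta \le 1 - \frac{1}{s(E_i)}$ shows $1 < r_i < s(E_i)$), and any sufficiently large finite $r_i$ when $s(E_i) = \infty$. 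Summing,
$$\sum_{i=1}^n \frac{1}{r_i} < \sum_{i=1}^n\left(\frac{1}{s(E_i)} + \frac{\delta}{n}\right) = \sum_{i=1}^n \frac{1}{s(E_i)} + \delta = 1,$$
as desired. There is no real obstacle here: the only thing to be careful about is not assuming that $s(E_i)$ itself is an admissible index, and Remark \ref{rema}(2) together with the slack $\delta$ coming from the strict inequality in the hypothesis takes care of exactly that.
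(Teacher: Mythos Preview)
Your proof is correct and follows essentially the same idea as the paper's: use the strict slack in the hypothesis to perturb each $s(E_i)$ downward to an index $r_i$ at which the composition property is actually attained, while keeping $\sum_i \frac{1}{r_i} < 1$. The only cosmetic difference is that you distribute the slack $\delta$ evenly across all coordinates simultaneously (invoking Remark~\ref{rema}(2) to guarantee every $r < s(E_i)$ works), whereas the paper handles the coordinates one at a time by a short induction using the definition of the supremum directly.
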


\begin{proof}
    Letting $K = \sum\limits_{i=2}^n \frac{1}{s(E_i)}$, we have $\frac{1}{s(E_1)} + K < 1$, so $\frac{1}{1 - K} < s(E_1)$. By the definition of $s(E_1)$, there exists a real number $r_1$ with $\frac{1}{1 - K} < r_1 \leq s(E_1)$ such that $E_1$ has the $\ell_{r_1}$-composition property, which yields that
    $  \frac{1}{r_1} + \sum\limits_{i=2}^n \frac{1}{s(E_i)} < 1$. Just repeat the argument for $i = 2, \dots, n$, to obtain the result.
\end{proof}

\section{$M$-weakly compact multilinear operators}

Recall that a bounded linear operator 
from a Banach lattice to a Banach space is said to be {\it $M$-weakly compact} if it maps norm bounded disjoint sequences to norm null sequences (see, e.g., \cite[Definition 3.6.9(iv)]{meyer}). It is a natural line of investigation in Functional Analysis to study multilinear versions of already studied classes of linear operators. Reinforcing that this line of investigation may be fruitful, the main results of this paper will be derived in the next section from results about two types of  $M$-weakly compact multilinear operators we introduce in this section. The first one is the following:

\begin{definition} \label{defmwc}\rm
Let $E_1, \dots, E_n$ be Banach lattices, and let $X$ be Banach space. An $n$-linear operator $A: E_1 \times \dots \times E_n \to X$ is said to be {\it $M$-weakly compact} if $\norma{A(x_1^k, \dots, x_n^k)} \to 0$ for all disjoint sequences $(x_1^k)_k$ in $ B_{E_1}, \dots, (x_n^k)_k$ in $ B_{E_k}$.
\end{definition}

According to the standard terminology, $A: E_1 \times \dots \times E_n \to X$ is said to be {\it separately $M$-weakly compact} if, for every $i \in \{1, \ldots,n\}$ and all $x_j \in E_j$, $j \neq i$, the linear operator $x_i \in E_i \mapsto A(x_1, \ldots, x_n) \in F $ is M-weakly compact.

\begin{example}\label{9ol2}\rm From \cite[Theorem 2.4.14]{meyer} we know that the dual $E^*$ of a Banach lattice $E$ has order continuous norm if and only if every linear functional on $E$ is $M$-weakly compact.

Let $E$ and $F$ be Banach lattices such that $E^*$ has order continuous norm, fix functionals $x^* \in E^*$ and $y^* \in F^*$, and consider the continuous bilinear form
$$B: E \times F \to \R~,~ B(x,y) = x^*(x)y^*(y).$$
On the one hand, $B$ is $M$-weakly compact  for any choice of $x^*$ and $y^*$. On the other hand, if $F^*$ fails to have order continuous norm, then we can choose $0 \neq x^* \in E^*$ and a non $M$-weakly compact linear functional $y^* \in F^*$. 
{In this case, fixing $x_0 \in E$ such that $x^*(x_0) \neq 0$, the resulting linear linear functional $y \in F \mapsto B(x_0, y)$ fails to be $M$-weakly compact.} In particular, $B$ is not separately $M$-weakly compact.
\end{example}

The example above suggests our second generalization of $M$-weakly compact linear operators:

\begin{definition} \label{defstrong}\rm
Let $E_1, \dots, E_n$ be Banach lattices and let $X$ be Banach space. An $n$-linear operator $A: E_1 \times \dots \times E_n \to X$ is said to be  {\it strongly $M$-weakly compact} if, fixing any $k \in \{0, \dots, n-1\}$ variables, the resulting $(n-k)$-linear operator is $M$-weakly compact.
\end{definition}

In the definition above, the case $k = 0$ means that $A$ is $M$-weakly compact, whereas the case $k = n-1$ means that $A$ is separately $M$-weakly compact. In particular: (i) Every strongly $M$-weakly compact operator is $M$-weakly compact and separately $M$-weakly compact. (ii) A bilinear operator is $M$-weakly compact if and only if it is $M$-weakly compact and separately $M$-weakly compact. For $n \geq 3$, this equivalence is no longer true in general: for a trilinear operator $A: E_1 \times E_2 \times E_3 \to F$ to be strongly $M$-weakly compact, for any fixed $x_1 \in E_1$, the bilinear operator
$$(x_2, x_3) \in E_2 \times E_3 \mapsto A(x_1, x_2, x_3) \in F, $$
must be $M$-weakly compact, a condition that does not follow automatically if $A$ is $M$-weakly compact and separately $M$-weakly compact.



As we saw in Example \ref{9ol2}, there are $M$-weakly compact bilinear forms that fail to be strongly $M$-weakly compact. But, if $E_1^*, \dots, E_n^*$ have order continuous norms, then  for all $x_1^* \in E_1^*, \dots, x_n^* \in E_n^*$, the $n$-linear form
$$ A: E_1 \times \cdots \times E_n \to \R~,~ A(x_1, \dots, x_n) = x_1^*(x_1)\cdots x_n^*(x_n), $$
is strongly $M$-weakly compact. Anyway, Definition \ref{defstrong} seems to be too demanding, that is, the class of strongly $M$-weakly compact multilinear operators seems to be very small. Nevertheless, we shall provide soon examples of Banach lattices $E_1, \dots, E_n$ and $F$ for which every (regular) $n$-linear operator from $E_1 \times \cdots \times E_n$ to $F$ is strongly $M$-weakly compact. Our first result in this direction is a multilinear version of \cite[Proposition 4.1]{chenwick}.

\begin{theorem} \label{teo1}
    Let $E_1, \dots, E_n$ be Banach lattices with $2n < s(E_i) < \infty$ for every $i =1, \dots, n$, and let $F$ be an abstract $L_q$-space with $1 \leq q \leq 2$. If $F$ does not contain any atoms, then every $A \in \mathcal{L}(E_1, \dots, E_n; F)$ is strongly $M$-weakly compact.
\end{theorem}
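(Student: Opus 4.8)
The plan is to reduce the theorem to a single statement about $M$-weak compactness of (plain, not regular) multilinear operators into $F$, and then to derive that statement from Proposition~\ref{lema2} together with the fact that every $L_q(\mu)$ with $1\le q\le 2$ has cotype $2$. For the reduction, recall that $A$ is strongly $M$-weakly compact precisely when, for every $J\subseteq\{1,\dots,n\}$ and every choice of points $a_j\in E_j$ ($j\in J$), the continuous $(n-|J|)$-linear operator $T\colon\prod_{i\notin J}E_i\to F$ obtained by plugging the $a_j$'s into the $j$-th slots is $M$-weakly compact. Since $s(E_i)>2n\ge 2(n-|J|)$ for every $i\notin J$, we get $\sum_{i\notin J}\tfrac1{s(E_i)}<\tfrac{n-|J|}{2n}\le\tfrac12$, so it suffices to prove the following: \emph{if $G_1,\dots,G_m$ are Banach lattices with $\sum_{i=1}^m\tfrac1{s(G_i)}<\tfrac12$ and $F$ is an abstract $L_q$-space with $1\le q\le 2$, then every $T\in\mathcal{L}(G_1,\dots,G_m;F)$ is $M$-weakly compact.} (The hypothesis that $F$ has no atoms will not actually be needed for this; it is subsumed in ``abstract $L_q$-space''.)

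To prove the claim, fix disjoint sequences $(x_i^k)_k$ in $B_{G_i}$, $i=1,\dots,m$. Running the argument in the proof of Lemma~\ref{lema3} with $\tfrac12$ in place of $1$ — at each step $\tfrac1{s(G_i)}$ lies strictly below the current threshold, so the supremum defining $s(G_i)$ still supplies an admissible exponent just below it — we obtain reals $1<r_i\le s(G_i)$ such that each $G_i$ has the $\ell_{r_i}$-composition property and $\alpha:=\sum_{i=1}^m\tfrac1{r_i}<\tfrac12$. By Proposition~\ref{lema2}, the sequence $y_k:=T(x_1^k,\dots,x_m^k)$ is $\tau_\alpha$-convergent to $0$ in $F$; fix $c>0$ with $\norma{\sum_{k\in D}y_k}\le c\,|D|^\alpha$ for every finite $D\subset\N$.

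It remains to pass from $\tau_\alpha$-convergence to norm convergence, which is where the structure of $F$ is used, and I expect this to be the crux of the proof. Being isometric to an $L_q(\mu)$ with $1\le q\le 2$, $F$ has cotype $2$; let $C$ be a cotype $2$ constant. For any $N$ and any signs $\varepsilon_1,\dots,\varepsilon_N\in\{-1,1\}$, splitting $\{1,\dots,N\}$ according to the sign of $\varepsilon_k$ gives $\norma{\sum_{k=1}^N\varepsilon_k y_k}\le 2c\,N^\alpha$, so the cotype $2$ inequality applied to $y_1,\dots,y_N$ yields $\big(\sum_{k=1}^N\norma{y_k}^2\big)^{1/2}\le 2cC\,N^\alpha$. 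If $\norma{y_k}\not\to 0$, pick a subsequence $(y_{k_j})_j$ with $\norma{y_{k_j}}\ge\delta>0$ for all $j$ (still $\tau_\alpha$-convergent to $0$ with the same $c$); then $\delta^2 N\le\sum_{j=1}^N\norma{y_{k_j}}^2\le(2cC)^2N^{2\alpha}$ for all $N$, i.e.\ $N^{1-2\alpha}\le(2cC)^2/\delta^2$, which is impossible because $\alpha<\tfrac12$. Hence $\norma{y_k}\to 0$, so $T$ is $M$-weakly compact, proving the claim and the theorem. The strict inequality $2n<s(E_i)$ enters precisely here: it upgrades the bound $\alpha<1$ available from Lemma~\ref{lema3} to $\alpha<\tfrac12$, which is exactly what makes $N^{1-2\alpha}\to\infty$; that this is sharp is shown by Example~\ref{exintro}(1), where $s(\ell_4)=4=2\cdot 2$ and the resulting bilinear operator $\ell_4\times\ell_4\to L_1[0,1]$ fails to be $M$-weakly compact.
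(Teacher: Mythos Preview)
Your proof is correct, and it takes a genuinely different route from the paper's. The paper argues by contradiction: it picks a single exponent $2n<r<\min_i s(E_i)$, builds operators $S_i\colon\ell_r\to E_i$ sending $e_j$ to the offending disjoint vectors, and then manufactures a specific unconditionally convergent series $\sum_j j^{-np/r}A(S_1e_{k_j},\dots,S_ne_{k_j})$ in $F$. At this point the atomless hypothesis is invoked to pass to a separable closed sublattice isometric to $L_q[0,1]$, where an Orlicz-type fact (unconditionally summable sequences have square-summable norms) yields the contradiction $\sum_j j^{-2np/r}<\infty$.

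You instead stay at the level of $\tau_\alpha$-convergence produced by Proposition~\ref{lema2}, sharpen $\alpha$ to a value $<\tfrac12$ via a straightforward adaptation of Lemma~\ref{lema3}, and then feed the uniform sign-bound $\norma{\sum\varepsilon_k y_k}\le 2cN^\alpha$ directly into the cotype $2$ inequality for abstract $L_q$-spaces with $1\le q\le 2$. This is cleaner: it bypasses the $\ell_1^u$ machinery and the separable-sublattice reduction, and, as you observe, it shows that the atomless assumption on $F$ is superfluous. The paper's detour through $L_q[0,1]$ is really a disguised appeal to cotype $2$, so your argument isolates the essential analytic input more transparently. One small remark: your parenthetical that the atomless hypothesis is ``subsumed in `abstract $L_q$-space'\,'' is slightly misleading --- $\ell_q$ is an abstract $L_q$-space with atoms --- but your intended meaning, that the hypothesis is simply unnecessary for your argument, is correct.
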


\begin{proof}
    Notice first that it suffices us to check that each $A \in \mathcal{L}(E_1, \dots, E_n; F)$ is $M$-weakly compact in the sense of Definition \ref{defmwc}. Indeed, fixing any $k \in \{1, \dots, n-1\}$ variables, the remaining $(n-k)$-linear operator is defined on $E_{i_1} \times \dots \times E_{i_{n-k}}$ for some $i_1, \ldots, i_{n-k} \in \{1, \ldots, n\}$, and $ \infty > s(E_{i_j}) > 2n > 2(n-k)$ for every $j = 1, \dots, n-k$. As mentioned before, we may assume that $F = L_q(\mu)$ for some measure $\mu$. Suppose, for the sake of contradiction, that there exists a non-$M$-weakly compact operator $A \in \mathcal{L}(E_1, \dots, E_n; F)$. 
    In this case there exist disjoint sequences
     $(x_1^k)_k \subset B_{E_1}, \dots, (x_n^k)_k \subset B_{E_k}$ such that $\displaystyle \lim_{k\to \infty} \norma{A(x_1^k, \dots, x_n^k)} \neq 0$. Thus, there exist $\varepsilon > 0$ and a subsequence $(k_j)_j$ of $\N$ such that $\norma{A(x_1^{k_j}, \dots, x_n^{k_j})} \geq \varepsilon$ for every $j \in \N$. Choosing $ \displaystyle 2n < r < \min_{1 \leq i \leq n} s(E_i)$, we get that $E_i$ has the property $\ell_r$-composition property for all $i =1, \dots, n$. Hence, for each $i =1, \dots, n$, there exists an operator $S_i: \ell_r \to E_i$ such that $S_i(e_j) = x_i^{k_j}$ for every $j \in \N$. In particular,
     $$ \norma{A(S_1(e_{j}), \dots, S_n(e_{j}))} = \norma{A(x_1^{k_j}, \dots, x_n^{k_j})} \geq \varepsilon. $$
     Taking $1 < p < \frac{r}{2n}$, it is easy to see that the series $\sum\limits_{j=1}^\infty j^{-p/r} e_{k_j}$ is unconditionally convergent in $\ell_r$ and that $\sum\limits_{j=1}^\infty j^{-2np/r} = + \infty$. From \cite[Proposition 8.3]{defantfloret} it follows that
     $$(j^{-p/r} e_{k_j})_j \in \ell_1^u(\ell_r) := \conj{(z_j)_j \in \ell_1^w(\ell_r)}{ \sup_{\varphi \in B_{\ell_r^*}} \sum_{j=n}^\infty|\varphi(z_j)| \overset{n\to\infty}{\longrightarrow} 0},$$
     where $\ell_1^w(\ell_r)$ denotes the collection of weakly absolutely summable sequences in $\ell_r$. Beware that \cite{defantfloret} uses the symbol $\ell_1^{w,0}$ instead of $\ell_1^u$. Therefore, for each $i =1, \dots, n$, $(S_i(j^{-p/r} e_{j}))_j \in \ell_1^u(F)$, and by \cite[Theorem 4.3]{botelhocampos} we get that
     $$(A(S_1(j^{-p/r} e_{j}), \dots, S_n( j^{-p/r}e_{j})))_j \in \ell_1^u(F).$$ Calling on \cite[Proposition 8.3]{defantfloret} once again, we have that the series
     $$\displaystyle \sum_{j=1}^\infty j^{-np/r} A(S_1(e_{j}), \dots, S_n(e_{j})) = \sum_{j=1}^\infty A(S_1(j^{-p/r} e_{k_j}), \dots, S_n( j^{-p/r}e_{k_j}))$$ is unconditionally convergent in $F$. 
     For each $j \in \N$, set
     $$y_j := j^{-np/r} A(S_1(e_{k_j}), \dots, S_n(e_{k_j})) \in F. $$ Putting $X := \overline{ \text{span}} \{y_j: j \in \N\}$, it follows from \cite[Exercise 9, p.\,204]{alip} that there exists a separable Banach sublattice $G$ of $F$ containing $X$. Moreover, since $F$ is an abstract $L_q$-space with no atoms, $G$ is a separable Banach lattice without atoms and with a $q$-additive norm. By \cite[Theorem 2.7.3]{meyer}, $G$ is isometrically lattice isomorphic to $L_q[0,1]$. Thus, $(y_j)_j$ is unconditionally summable, and by a comment at the bottom of page 23 in \cite{diestel}, we get that
    $$\infty > \sum_{j=1}^\infty \norma{y_j}^2 = \displaystyle \sum_{j=1}^\infty \norma{j^{-np/r} A(S_1(e_{k_j}), \dots, S_n(e_{k_j}))}^2 \geq \varepsilon^2 \sum_{j=1}^\infty j^{-2np/r} = \infty, $$
    a contradiction that completes the proof.
\end{proof}

Let us see that Theorem \ref{teo1} is false if we either drop the assumptions on the lower indices of $E_i$ or if we take $q > 2$.

\begin{examples}\rm
{\rm (1)} The bilinear operator $A: \ell_4 \times \ell_4 \to L_1([0,1])$ from Example \ref{exintro}(1) is not $M$-weakly compact because $\norma{A(e_k, e_k)}_1 = \norma{r_k}_1 = 1$ holds for every $k \in \N$. Thus, Theorem \ref{teo1} is false if we take that $s(E_i) = 2n$.\\
{\rm (2)}  For each $q > 2$, there exists an embedding $T: \ell_q \to L_q([0,1])$ (see \cite[Proposition 6.4.3]{albiac}). So, $A((a_j)_j, (b_j)_j) = \sum\limits_{j=1}^\infty a_j b_j T(e_j)$ defines a continuous bilinear operator from $\ell_q \times \ell_\infty$ to $L_q([0,1])$. As $(e_k)_k$ is not norm null and $T$ is an isomorphism onto its range, $(Te_k)_k$ is not norm null in $L_q([0,1])$. Since $A(e_k, e_k) = T(e_k)$ for every $k \in \N$, we conclude that $A$ fails to be $M$-weakly compact. Thus, Theorem \ref{teo1} is false if we take $q > 2$.
\end{examples}

Our next purpose is to prove the following:

\begin{theorem} \label{teo2}
Let $E_1, \dots, E_n$ and $F$ be Banach lattices such that $\sum\limits_{i=1}^n \frac{1}{s(E_i)} < \frac{1}{\sigma(F)}$. \\
{\rm (1)}  If $F$ is atomic, then every $A \in \mathcal{L}(E_1, \dots, E_n; F)$ is strongly $M$-weakly compact. \\
{\rm (2)} Every $A \in \mathcal{L}^r(E_1, \dots, E_n; F)$ is strongly $M$-weakly compact.
\end{theorem}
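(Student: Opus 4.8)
The plan is to deduce both parts from the single assertion that $A$ is $M$-weakly compact in the sense of Definition~\ref{defmwc}, via the same reduction opening the proof of Theorem~\ref{teo1}: fixing $k\le n-1$ variables leaves an $(n-k)$-linear operator on a subproduct $E_{i_1}\times\cdots\times E_{i_{n-k}}$ whose associated partial sum of the $1/s(E_{i_j})$ is still $<1/\sigma(F)$; in case (2), writing $A=A_1-A_2$ with $A_1,A_2$ positive and splitting the fixed entries into positive and negative parts keeps the surviving operator regular, and in case (1) the target $F$ is unchanged. So I would fix disjoint sequences $(x_i^k)_k\subset B_{E_i}$, put $y_k:=A(x_1^k,\dots,x_n^k)$, and aim at $\|y_k\|\to0$. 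Since $\sum_i 1/s(E_i)<1/\sigma(F)$ forces $\sigma(F)<\infty$, I would first pick $q$ with $F$ having the $\ell_q$-decomposition property and $\sum_i 1/s(E_i)<1/q$, and then, running the proof of Lemma~\ref{lema3} with the constant $1$ replaced by $1/q$, pick $r_i\le s(E_i)$ with $E_i$ having the $\ell_{r_i}$-composition property, $r_i>1$ (as $1/q\le1/\sigma(F)\le1$), and $\alpha:=\sum_i 1/r_i<1/q$. By Proposition~\ref{lema2}, $(y_k)_k$ is then $\tau_\alpha$-convergent to $0$ and weakly null, with $q\alpha<1$. I would use throughout that $F$ has order continuous norm [Remark~\ref{rema}(6)] and that the $\ell_q$-decomposition property is the strong one [Remark~\ref{rema}(8)], i.e.\ there is $M>0$ with $\bigl(\sum_{j=1}^N\|z_j\|^q\bigr)^{1/q}\le M\|\sum_{j=1}^N z_j\|$ for every finite disjoint $\{z_1,\dots,z_N\}\subset F$.

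For (1): assuming $\|y_k\|\not\to0$, pass to a subsequence with $\|y_{k_j}\|\ge\varepsilon$. Since $F$ is atomic with order continuous norm, each $y_k$ expands norm-convergently along the atoms, only countably many atoms occur among the $y_{k_j}$, and weak nullity gives coordinatewise convergence to $0$ along them; a gliding hump on these atoms, picking successive finite blocks with order continuity killing the tails, yields a further subsequence, still $(y_{k_j})_j$, and a disjoint sequence $(z_j)_j$ in $F$ with $\|y_{k_j}-z_j\|\to0$. Then $\|z_j\|\ge\varepsilon/2$ eventually and, since $\|\sum_{j\in B}z_j\|\le\|\sum_{j\in B}y_{k_j}\|+\sum_j\|y_{k_j}-z_j\|\le c'|B|^\alpha$ (using $|B|^\alpha\ge1$), $(z_j)_j$ is still $\tau_\alpha$-convergent to $0$. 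The strong $\ell_q$-decomposition inequality then gives $N(\varepsilon/2)^q\le\sum_{j=1}^N\|z_j\|^q\le(Mc')^qN^{q\alpha}$ for all $N$, which is absurd since $q\alpha<1$.

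For (2): from $\|A(x_1^k,\dots,x_n^k)\|\le\|(A_1+A_2)(|x_1^k|,\dots,|x_n^k|)\|$, with $(|x_i^k|)_k$ still disjoint in $B_{E_i}$, I may assume $A$ and all $x_i^k$ positive, so $y_k\ge0$. The goal becomes: $\{y_k:k\in\mathbb{N}\}$ is $L$-weakly compact, for then, being also weakly null, it is norm null. It suffices that every disjoint sequence $(d_j)_j$ in the solid hull $\bigcup_k[-y_k,y_k]$ be norm null. Choosing $k_j$ with $|d_j|\le y_{k_j}$: if infinitely many $k_j$ coincide with one $k$, the corresponding $d_j$ lie disjointly in the order interval $[-y_k,y_k]$, hence are norm null by order continuity, so I may assume the $k_j$ distinct; then $(|d_j|)_j$ is disjoint with $0\le\sum_{j\in B}|d_j|\le\sum_{j\in B}y_{k_j}$, hence $\tau_\alpha$-convergent to $0$, and if some subsequence had $\|d_{j_l}\|\ge\delta>0$, the strong $\ell_q$-decomposition inequality applied to $(|d_{j_l}|)_l$ would force $L\delta^q\le(Mc)^qL^{q\alpha}$ for all $L$, which is impossible. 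Hence $\|d_j\|\to0$.

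The hard part will be the two disjointifications (where positivity, in (2), and atomicity, in (1), are genuinely used) and, alongside them, the implication ``weakly null $\Rightarrow$ norm null'': in (1) one must carefully run the gliding hump on the atoms, order continuity being what kills the tails; in (2) one must invoke the description of $L$-weak compactness through disjoint sequences in the solid hull, and then the fact that a weakly null sequence sitting in an $L$-weakly compact subset of an order continuous Banach lattice is norm null, a point I would either cite or prove by reducing to a separable closed sublattice, using that an $L$-weakly compact set is almost order bounded to write $y_k=y_k\wedge u+(y_k-u)^+$, and then checking that the positive, order bounded, weakly null sequence $(y_k\wedge u)_k$ is norm null (a strictly positive functional turns weak nullity into $L_1$-convergence on $[0,u]$, and order continuity, via the Dedekind completion if needed, finishes it). A minor bookkeeping point is re-running Lemma~\ref{lema3} with threshold $1/q$ and checking that the resulting $r_i$ exceed $1$.
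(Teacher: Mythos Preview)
Your argument is correct and takes a genuinely different route from the paper's. The paper reduces, via \cite[Corollary~2.6]{doddsfremlin}, to two conditions: $|A(x_1^k,\dots,x_n^k)|\cvf 0$, and $y_k^*(A(x_1^k,\dots,x_n^k))\to 0$ for every positive disjoint bounded $(y_k^*)\subset F^*$. The first is handled just as you do (Proposition~\ref{lema2} plus atomicity or positivity); the second is the heart of the paper's proof and is established as a separate multilinear analogue of \cite[Theorem~7.2]{doddsfremlin} (Proposition~\ref{lemamweak2}), which in turn requires an And\^o-type subsequence selection (Lemma~\ref{lemaaux}) and a H\"older-type lemma (Lemma~\ref{lemaholder}). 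That whole argument lives on the dual side.

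You avoid the dual and the And\^o selection entirely, using instead the quantitative $\tau_\alpha$ bound together with the strong $\ell_q$-decomposition of $F$. For (1) this is very clean: the gliding hump yields disjoint $(z_j)$ that inherit $\tau_\alpha$-convergence, and the mismatch $N$ versus $N^{q\alpha}$ with $q\alpha<1$ gives the contradiction. For (2) your $L$-weak compactness computation is an elegant replacement for Proposition~\ref{lemamweak2}. Two cautions, though. First, the sentence ``weakly null in an $L$-weakly compact set $\Rightarrow$ norm null'' is \emph{false} as stated: the Rademacher sequence in $L_1[0,1]$ is weakly null, sits in the $L$-weakly compact interval $[-1,1]$, and has norm one; you must keep the hypothesis $y_k\ge 0$ explicit at that step. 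Second, your terminal reduction ``positive, order bounded, weakly null $\Rightarrow$ norm null in an order-continuous lattice'' is true but not soft; the route you sketch (separable sublattice with weak unit, representation as an ideal $L_\infty(\mu)\subset G\subset L_1(\mu)$, then $\|z_k\|_{L_1}\to 0$, an a.e.-null subsequence, and dominated convergence in $G$) works, but ``order continuity finishes it'' undersells the amount of structure being invoked. In effect you trade the paper's dual-space machinery for a function-space representation at the end; the two approaches are of comparable depth, but yours keeps the entire argument inside $F$ and makes the role of the $\tau_\alpha$ estimate more transparent.
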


A multilinear version of \cite[Theorem 7.2]{doddsfremlin} is needed to prove Theorem \ref{teo2}. To do so, we will need the following two lemmas. The first one is a multilinear version of an argument used in the proof of \cite[Main Theorem]{ando}.

\begin{lemma} \label{lemaaux}
    Let $X_1, \dots, X_n$ and $Z$ be Banach spaces, let $A \in \mathcal{L}(X_1, \dots, X_n; Z)$ be given, let $(x_1^k)_k \subset X_1, \dots, (x_n^k)\subset X_n$ be weakly null sequences, and let $(f_k)_k$ be a weak$^*$ null sequence in $Z^*$. Then, there exists a subsequence $(k_j)_j$ of $\N$ such that $|f_k(A(x_1^{k_{j_1}}, \dots, x_n^{k_{j_n}}))| \leq 2^{- \max \{l, j_1, \dots, j_n\}}$ whenever $(l, j_1, \dots, j_n)$ has at least two different coordinates.
\end{lemma}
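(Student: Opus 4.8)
The plan is to prove this by a diagonal/exhaustion argument, iteratively extracting subsequences so that more and more of the ``mixed'' pairings become small, exploiting the fact that in each variable the relevant quantity tends to zero.

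First I would record the elementary estimates we can use. Since each $(x_i^k)_k$ is weakly null, it is norm bounded, say by $C$; since $(f_k)_k$ is weak$^*$ null it is norm bounded, say by $M$; and $A$ is continuous with norm $\|A\|$. Hence $|f_l(A(x_1^{j_1},\dots,x_n^{j_n}))| \le M\|A\|C^n =: K$ for all indices. The point is that if we freeze all but one coordinate at fixed values and let the remaining coordinate index go to infinity, the resulting scalar sequence tends to $0$: if we let the $f$-index $l\to\infty$ with everything else fixed, this is weak$^*$ nullity of $(f_l)_l$ applied to the fixed vector $A(x_1^{j_1},\dots,x_n^{j_n})$; if we let one of the $x_i$-indices $j_i\to\infty$ with everything else fixed, then $x_i^{j_i}\to 0$ weakly, so $A(x_1^{j_1},\dots,x_i^{j_i},\dots,x_n^{j_n})\to 0$ weakly in $Z$ (by continuity and multilinearity of $A$, fixing the other slots), hence $f_l$ of it tends to $0$.

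Next I would set up the inductive extraction. The aim is to produce a subsequence, which after relabeling we may take to be $(1,2,3,\dots)$, such that $|f_l(A(x_1^{j_1},\dots,x_n^{j_n}))|\le 2^{-\max\{l,j_1,\dots,j_n\}}$ whenever the tuple $(l,j_1,\dots,j_n)$ has at least two distinct entries. I would build the subsequence $k_1<k_2<\cdots$ recursively: having chosen $k_1,\dots,k_{m-1}$, I choose $k_m$ large enough so that for every tuple $(l,j_1,\dots,j_n)$ whose entries all lie in $\{k_1,\dots,k_m\}$, whose maximum equals $k_m$ (i.e.\ $k_m$ actually appears), and which has at least two distinct coordinates, the bound $|f_l(A(\cdots))|\le 2^{-m}$ holds. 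This is possible because there are only finitely many such tuples of that form once we also fix \emph{which} coordinate attains the new maximum $k_m$; for each fixed pattern, letting $k_m\to\infty$ while the other (smaller) indices range over the finite set $\{k_1,\dots,k_{m-1}\}$, each of the finitely many scalar sequences tends to $0$ by the observation of the previous paragraph (the coordinate equal to $k_m$ is the ``escaping'' one, and since at least two coordinates differ, that escaping coordinate is genuinely being pushed to infinity against fixed data in the other slots). Taking $k_m$ beyond the finitely many thresholds handles all patterns at stage $m$.

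Finally I would check the extracted subsequence works. Given any tuple $(l,j_1,\dots,j_n)$ of indices from the subsequence with at least two distinct entries, let $m$ be the position such that $\max\{l,j_1,\dots,j_n\}=k_m$; then at step $m$ of the construction the bound $|f_l(A(x_1^{j_1},\dots,x_n^{j_n}))|\le 2^{-m}\le 2^{-\max\{l,j_1,\dots,j_n\}}$ was explicitly arranged, since after relabeling $k_m$ as $m$ we have $m=\max\{l,j_1,\dots,j_n\}$ in the new indexing. (I should be slightly careful about bookkeeping between the original indices $k_m$ and the relabeled ones, but since we only need $2^{-\max}$ in terms of the relabeled indices this is harmless: $k_m\ge m$, so if anything the bound is stronger.) The main obstacle — really the only delicate point — is organizing the induction so that \emph{all} mixed tuples, across all choices of which slot escapes, are simultaneously controlled at each stage; the resolution is precisely that at stage $m$ only finitely many tuples have maximum exactly $m$, so finitely many limit conditions suffice. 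Everything else is the routine triple use of ``norm-bounded times continuous times (weakly/weak$^*$) null.''
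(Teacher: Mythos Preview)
Your extraction argument is the same as the paper's, and it carries the same unaddressed case. Your parenthetical ``the coordinate equal to $k_m$ is the escaping one'' presumes that only one coordinate sits at the new maximum, and indeed the ``observation of the previous paragraph'' only shows $f_l(A(x_1^{j_1},\dots,x_n^{j_n}))\to 0$ when \emph{exactly one} index is pushed to infinity with the others frozen. But at stage $m$ you must also control tuples in which \emph{several} coordinates equal $k_m$ --- for instance $(l,j_1,j_2)=(k_m,k_m,k_{1})$ in the bilinear case --- and there two or more slots escape simultaneously. Weak$^*$-nullity of $(f_k)$ combined with weak nullity in another slot does not force such joint limits to vanish: take $X_1=Z=\ell_2$, $X_2=\mathbb R$, $A(x,t)=tx$, $x_1^k=e_k$, $x_2^k=1/k$, $f_k=\langle\,\cdot\,,e_k\rangle$. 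For \emph{every} subsequence $(k_j)_j$ and every $m\ge 2$ one has
\[
|f_{k_m}(A(x_1^{k_m},x_2^{k_1}))|=\tfrac{1}{k_1},
\]
a fixed positive number, so the required bound $2^{-m}$ fails for large $m$. Hence the lemma, read literally for all $(l,j_1,\dots,j_n)$ with at least two distinct entries, is false; what both your construction and the paper's induction (which at step $N+1$ only sums over terms with a single new index) actually establish is the inequality for those tuples whose maximum is attained at a \emph{unique} coordinate.
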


\begin{proof}
    We prove the  case $n = 2$. Let $X, Y$ and $Z$ be Banach spaces, let $A \in \mathcal{L}(X, Y; Z)$, let $(x_k)_k \subset X$ and $(y_k)_k \subset Y$ be weakly null sequences, and let $(f_k)_k$  be a weak$^*$ null sequence in $Z^*$. Choose $k_1 = 1$.
    Since
    $$ |f_k(A(x_{k_1}, y_{k_1}))| + |f_{k_1}(A(x_k, y_{k_1}))| + |f_{k_1}(A(x_{k_1}, y_k))| \overset{k \to \infty}{\longrightarrow} 0, $$
    there exists $k_2 > k_1$ such that
    $$ |f_{k_2}(A(x_{k_1}, y_{k_1}))| + |f_{k_1}(A(x_{k_2}, y_{k_1}))| + |f_{k_1}(A(x_{k_1}, y_{k_2}))| \leq 2^{-2}. $$
    Now, from
    $$ \sum_{i,j = 1}^2 |f_k(A(x_{k_i}, y_{k_j}))| + \sum_{i,j = 1}^2 |f_{k_i}(A(x_k, y_{k_j}))| + \sum_{i,j = 1}^2 |f_{k_i}(A(x_{k_j}, y_k))| \overset{k \to \infty}{\longrightarrow} 0, $$
    there is $k_3 > k_2$ such that
    $$ \sum_{i,j = 1}^2 |f_{k_3}(A(x_{k_i}, y_{k_j}))| + \sum_{i,j = 1}^2 |f_{k_i}(A(x_{k_3}, y_{k_j}))| + \sum_{i,j = 1}^2 |f_{k_i}(A(x_{k_j}, y_{k_3}))| \leq 2^{-3}. $$
    So far, we have $k_3 > k_2 > k_1$ such that $\displaystyle |f_{k_l} (x_{k_i}, y_{k_j}) | \leq 2^{-\max \{l, i, j \}}$ whenever $(l,i,j) \in \{1,2,3\}^3$ has at least two different coordinates.
    Suppose that $k_1 < k_2 < \cdots < k_N$ have been chosen such that $ |f_{k_l} (x_{k_i}, y_{k_j}) | \leq 2^{-\max \{l, i, j \}}$  whenever $(l,i,j) \in \{1, \dots, N\}^3$ has at least two different coordinates. From the convergence
    $$ \sum_{i,j = 1}^N |f_k(A(x_{k_i}, y_{k_j}))| + \sum_{i,j = 1}^N |f_{k_i}(A(x_k, y_{k_j}))| + \sum_{i,j = 1}^N |f_{k_i}(A(x_{k_j}, y_k))| \overset{k \to \infty}{\longrightarrow} 0, $$
    there exists $k_{N+1} > k_N$ such that
    $$ \sum_{i,j = 1}^N |f_{k_{N+1}}(A(x_{k_i}, y_{k_j}))| + \sum_{i,j = 1}^N |f_{k_i}(A(x_{k_{N+1}}, y_{k_j}))| + \sum_{i,j = 1}^N |f_{k_i}(A(x_{k_j}, y_{k_{N+1}}))| \leq 2^{-(N+1)}. $$
    Combining the above inequality with the induction hypothesis, we get  $\displaystyle |f_{k_l} (x_{k_i}, y_{k_j}) | \leq 2^{-\max \{l, i, j \}}$ whenever $(l,i,j) \in \{1, \dots, N+1\}^3$ has at least two different coordinates, and we are done.
\end{proof}


\begin{lemma} \label{lemaholder}
 Let $n \geq 2$ be an integer.    If $r_1, \dots, r_n > 1$ are such that $\frac{1}{r_1} + \cdots + \frac{1}{r_n} < 1$, then there are $(a_1^j)_j \in \ell_{r_1}^+, \dots, (a_n^j)_j \in \ell_{r_n}^+$ such that $\sum\limits_{j=1}^\infty a_1^n \cdots a_1^n = + \infty$.
\end{lemma}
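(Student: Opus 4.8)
The plan is to look for the required sequences among power sequences, setting $a_i^j := j^{-\beta_i}$ for $j \in \N$ and suitably chosen exponents $\beta_i > 0$. For such a choice one has $(a_i^j)_j \in \ell_{r_i}^+$ if and only if $\sum_j j^{-\beta_i r_i} < \infty$, i.e. $\beta_i r_i > 1$, equivalently $\beta_i > 1/r_i$; and $\sum_{j=1}^\infty a_1^j \cdots a_n^j = \sum_{j=1}^\infty j^{-(\beta_1 + \cdots + \beta_n)}$, which diverges as soon as $\beta_1 + \cdots + \beta_n \leq 1$. So the whole lemma reduces to producing real numbers $\beta_1, \dots, \beta_n$ with $\beta_i > 1/r_i$ for every $i$ and $\beta_1 + \cdots + \beta_n \leq 1$.

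This is exactly the point where the hypothesis $\frac{1}{r_1} + \cdots + \frac{1}{r_n} < 1$ is used, and it is the only place it is needed. Put $\varepsilon := \frac{1}{n}\bigl(1 - \sum_{i=1}^n \frac{1}{r_i}\bigr)$, which is strictly positive by hypothesis, and set $\beta_i := \frac{1}{r_i} + \varepsilon$ for $i = 1, \dots, n$. Then $\beta_i > \frac{1}{r_i}$ and $\sum_{i=1}^n \beta_i = \sum_{i=1}^n \frac{1}{r_i} + n\varepsilon = 1$. With this choice, $\beta_i r_i = 1 + \varepsilon r_i > 1$, so each $(j^{-\beta_i})_j$ indeed belongs to $\ell_{r_i}^+$, while $\sum_{j=1}^\infty j^{-(\beta_1 + \cdots + \beta_n)} = \sum_{j=1}^\infty j^{-1} = +\infty$ is the harmonic series. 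This completes the construction (noting that the product in the statement is $a_1^j \cdots a_n^j$).

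There is essentially no obstacle: the statement is a soft consequence of the divergence of $\sum_j j^{-1}$ together with the convergence of $\sum_j j^{-t}$ for $t > 1$. The only step requiring a (minimal) bit of care is verifying that the strict inequality $\sum_i 1/r_i < 1$ leaves enough room to raise each exponent strictly above $1/r_i$ while keeping their sum at most $1$, and the explicit choice $\beta_i = 1/r_i + \varepsilon$ above does precisely that.
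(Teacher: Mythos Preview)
Your proof is correct and takes a genuinely different, more elementary route than the paper. The paper first sets $p = \bigl(\sum_{i=1}^{n-1} 1/r_i\bigr)^{-1}$, notes that $r_n > p^*$ so that $\ell_{r_n} \not\subset \ell_{p^*}$, picks $(b_j)_j \in \ell_{r_n} \setminus \ell_{p^*}$, and then invokes the Banach--Steinhaus theorem to obtain $(x_j)_j \in \ell_p$ with $\sum_j x_j b_j = +\infty$; finally it factors $x_j = a_1^j \cdots a_{n-1}^j$ via $a_i^j := x_j^{p/r_i}$. Your explicit choice $a_i^j = j^{-\beta_i}$ with $\beta_i = 1/r_i + \varepsilon$ bypasses all of this, relying only on the $p$-series test, and produces completely explicit sequences. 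Both arguments hinge on the same gap $1 - \sum_i 1/r_i > 0$, but yours is shorter, fully constructive, and avoids the (slightly heavy) appeal to the Uniform Boundedness Principle; the paper's approach, on the other hand, illustrates how the result connects to the duality $\ell_p^* = \ell_{p^*}$ and the strictness of the inclusions $\ell_p \subsetneq \ell_q$.
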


\begin{proof}
    Letting $p =  \dfrac{1}{\frac{1}{r_1} + \cdots + \frac{1}{r_{n-1}}}$, we have $\frac{1}{p} + \frac{1}{r_n} < 1$. Hence, $r_n > p^*$,  where $p^*$ is the conjugate exponent of $p$, which yields that $\ell_{r_n} \not\subset \ell_{p^*}$. Take $(b_j)_j \in \ell_{q} \setminus \ell_{p^*}$. By a classical application of Banach's Steinhauss Theorem (Principle of Uniform Boundedness), there exists $(x_j)_j \in \ell_p$ such that $ \sum\limits_{j=1}^\infty x_j b_j = +\infty$. For each $i = 1, \dots, n-1$, define $a_j^i = x_j^{p/r_i}$ for every $j \in \N$. It is easy to see that $x_j = a_1^j \cdots a_{n-1}^j$ for every $j \in \N$ and $(a_i^j)_j \in \ell_{r_i}^+$ for each $i = 1, \dots, n-1$.  Therefore, $\sum\limits_{j=1}^\infty a_1^j \cdots a_{n-1}^jb_j = + \infty$.
\end{proof}

Next we prove a multilinear version of \cite[Theorem 7.2]{doddsfremlin}.

\begin{proposition} \label{lemamweak2}
    Let $E_1, \dots, E_n$ and $F$ be Banach lattices such that $\sum\limits_{i=1}^n \frac{1}{s(E_i)} < \frac{1}{\sigma(F)}$. Then $\displaystyle \lim_{k \to \infty} y_k^\ast(A(x_1^k, \dots, x_n^k)) = 0$ for every $A \in \mathcal{L}(E_1, \dots, E_n; F)$ and all positive disjoint norm bounded sequences $(x_1^k)_k$ in $E_1, \dots, (x_n^k)$ in $E_n$, $(y_k^\ast)$ in $ F$.
\end{proposition}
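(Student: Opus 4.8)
The plan is to convert the index hypothesis into a Hölder-type incompatibility and then to play the off-diagonal decay provided by Lemma~\ref{lemaaux} against a divergent product series produced by Lemma~\ref{lemaholder}; no regularity of $A$ is needed, only the lattices matter. First I fix exponents: since $\sum_{i=1}^n \frac1{s(E_i)} < \frac1{\sigma(F)}$ we have in particular $\frac1{\sigma(F)}>0$, hence $\sigma(F)<\infty$. Repeating the proof of Lemma~\ref{lema3} with $\frac1{\sigma(F)}$ in place of $1$ gives reals $r_i\in(1,s(E_i)]$ such that each $E_i$ has the $\ell_{r_i}$-composition property and $\sum_{i=1}^n \frac1{r_i} < \frac1{\sigma(F)}$. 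Then I choose $s$ with $\sigma(F)<s<\infty$ and $\sum_{i=1}^n\frac1{r_i}<\frac1s$; by Remark~\ref{rema}(2) $F$ has the $\ell_s$-decomposition property, hence by Remark~\ref{rema}(5) $F^*$ has the $\ell_{s^*}$-composition property, where $\frac1s+\frac1{s^*}=1$, $1<s^*<\infty$, and $\frac1{r_1}+\cdots+\frac1{r_n}+\frac1{s^*}<\frac1s+\frac1{s^*}=1$, so Lemma~\ref{lemaholder} will be applicable to the exponents $r_1,\dots,r_n,s^*$.

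Suppose the conclusion fails. Passing to a subsequence, I may assume $|y_k^*(A(x_1^k,\dots,x_n^k))|\ge\varepsilon$ for all $k$ and some $\varepsilon>0$. By Remark~\ref{rema}(7)(ii) I pick bounded operators $S_i:\ell_{r_i}\to E_i$ with $S_ie_k=x_i^k$ and $R:\ell_{s^*}\to F^*$ with $Re_k=y_k^*$. Since $(e_k)_k$ is weakly null in each reflexive $\ell_{r_i}$ and in $\ell_{s^*}$, the sequences $(x_i^k)_k=(S_ie_k)_k$ are weakly null in $E_i$ and $(y_k^*)_k=(Re_k)_k$ is weak$^*$ null in $F^*$, so Lemma~\ref{lemaaux} yields a subsequence $(k_j)_j$ along which
$$ |y_{k_l}^*(A(x_1^{k_{j_1}},\dots,x_n^{k_{j_n}}))|\le 2^{-\max\{l,j_1,\dots,j_n\}} $$
whenever $(l,j_1,\dots,j_n)$ has two distinct coordinates. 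Next, Lemma~\ref{lemaholder} applied to $r_1,\dots,r_n,s^*$ gives $(a_i^j)_j\in\ell_{r_i}^+$ $(i=1,\dots,n)$ and $(b^j)_j\in\ell_{s^*}^+$ with $\sum_j b^ja_1^j\cdots a_n^j=+\infty$; as $r_1,\dots,r_n,s^*<\infty$, all these sequences are bounded, say by $M<\infty$. Finally, for each $j$ choose $\theta_j\in\{-1,1\}$ with $\theta_j\,y_{k_j}^*(A(x_1^{k_j},\dots,x_n^{k_j}))=|y_{k_j}^*(A(x_1^{k_j},\dots,x_n^{k_j}))|$.

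The crux is then a single estimate. For $N\in\N$, multilinearity expands
$$ \Theta_N:=R\Big(\sum_{l=1}^N\theta_lb^le_{k_l}\Big)\!\left(A\Big(S_1\big(\textstyle\sum_{j=1}^Na_1^je_{k_j}\big),\dots,S_n\big(\sum_{j=1}^Na_n^je_{k_j}\big)\Big)\right) $$
into $\Theta_N=\sum_{l,j_1,\dots,j_n=1}^N\theta_lb^la_1^{j_1}\cdots a_n^{j_n}\,y_{k_l}^*(A(x_1^{k_{j_1}},\dots,x_n^{k_{j_n}}))$. On one hand, $|\Theta_N|\le\|R\|\,\|A\|\,\|(b^l)_l\|_{s^*}\prod_{i=1}^n\|S_i\|\,\|(a_i^j)_j\|_{r_i}=:C$, a bound independent of $N$. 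On the other hand, the diagonal part ($l=j_1=\cdots=j_n=j$) equals $\sum_{j=1}^Nb^ja_1^j\cdots a_n^j\,|y_{k_j}^*(A(x_1^{k_j},\dots,x_n^{k_j}))|\ge\varepsilon\sum_{j=1}^Nb^ja_1^j\cdots a_n^j$, while the off-diagonal part has absolute value at most $M^{n+1}\sum_{(l,j_1,\dots,j_n)\text{ not all equal}}2^{-\max\{l,j_1,\dots,j_n\}}=:D$, and $D<\infty$ because at most $(n+1)m^n$ tuples have maximum $m$. Hence $\varepsilon\sum_{j=1}^Nb^ja_1^j\cdots a_n^j\le C+D$ for every $N$, contradicting $\sum_j b^ja_1^j\cdots a_n^j=+\infty$ and finishing the proof.

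I expect the main obstacle to be the bookkeeping in this final step: one must choose the signs $\theta_j$ so that the diagonal genuinely dominates (necessary because $A$ is not assumed positive, so the scalars $y_{k_j}^*(A(x_1^{k_j},\dots,x_n^{k_j}))$ need not have constant sign) and verify that the geometric decay $2^{-\max}$ from Lemma~\ref{lemaaux} outweighs the polynomial count of off-diagonal index tuples; the preliminary exponent adjustment securing $\frac1{r_1}+\cdots+\frac1{r_n}+\frac1{s^*}<1$, which is exactly what makes Lemma~\ref{lemaholder} usable, is the other point requiring care.
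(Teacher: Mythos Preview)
Your proof is correct and follows essentially the same strategy as the paper's: adjust exponents so that Lemma~\ref{lemaholder} applies to $r_1,\dots,r_n$ together with the composition exponent of $F^*$, invoke Lemma~\ref{lemaaux} for off-diagonal decay, and derive a contradiction between a bounded multilinear evaluation and its divergent diagonal part. The differences are cosmetic---you handle signs via $\theta_j$ rather than by replacing $x_1^k$ with $-x_1^k$, reach the $\ell_{s^*}$-composition property of $F^*$ through Remark~\ref{rema}(5) instead of the index duality formula Remark~\ref{rema}(11), and work with finite partial sums in all coordinates rather than forming the infinite sums $x_i=\sum_j a_i^j x_i^j$ in the $E_i$---but the core argument is identical.
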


\begin{proof}
    We notice first that, since $\sum\limits_{i=1}^n \frac{1}{s(E_i)} < \frac{1}{\sigma(F)}$, we have $\sigma(F) < \infty$ and $s(E_i) > 1$ for all $i = 1, \dots, n$. This implies that $E_1^\ast, \dots, E_n^\ast$ and $F$ have order continuous norm by Remark \ref{rema}.  Moreover, it follows from Remark \ref{rema}(11), that
    $$ \sum_{i=1}^n \frac{1}{s(E_i)} + \frac{1}{s(F^*)} = \sum_{i=1}^n \frac{1}{s(E_i)} + 1 - \frac{1}{\sigma(F)} < 1.  $$
    By Lemma \ref{lema3}, there are $r_1 \leq s(E_1), \dots, r_n \leq s(E_n)$ and $s \leq s(F^*)$ such that each $E_i$ has the $\ell_{r_i}$-composition property, $F$ has the $\ell_s$-composition property, and
    $ \sum\limits_{i=1}^n \frac{1}{r_i} + \frac{1}{s}< 1$. By Lemma \ref{lemaholder} there are positive sequences $(a_1^j)_j \in \ell_{r_1}, \dots, (a_n^j)_j \in \ell_{r_n}$ and $(b_j)_j \in \ell_s$ so that $\sum\limits_{j=1}^\infty  b_j a_{1}^{j} \cdots a_{n}^{j} = + \infty$.

    Suppose, for the sake of contradiction, that there are $A \in \mathcal{L}(E_1, \dots, E_n; F)$ and normalized disjoint sequences $(x_1^k)_k$ in $ E_1, \dots, (x_n^k)$ in $ E_n, (y_k^\ast)$ in $F^*$ so that $\displaystyle \lim_{k \to \infty} y_k^\ast(A(x_1^k, \dots, x_n^k)) \neq 0$. By passing to a subsequence if necessary, we may assume that there exists $\varepsilon > 0$ such that $|y_k^\ast(A(x_1^k, \dots, x_n^k))| \geq \varepsilon$ for every $k \in \N$. By replacing $x_1^k$ with $-x_1^k$ if necessary, we may assume that $y_k^\ast(A(x_1^k, \dots, x_n^k)) \geq \varepsilon$ holds for every $k \in \N$. Since $E_1^*, \dots, E_n^*$ and $F$ have order continuous norms, we get from \cite[Theorem 2.4.14 and Corollary 2.4.3]{meyer} that $(x_1^k)_k, \dots, (x_n^k)_k$ are weakly null in $E_1, \dots, E_n$, respectively, and $(y_k^*)_k$ is weak* null in $F^*$. Thus, by Lemma \ref{lemaaux}, we may assume, by passing to a subsequence if necessary, that $ |y_k^*(A(x_1^{j_1}, \dots, x_n^{j_n}))| \leq 2^{-\max\{ k, j_1, \dots, j_n \}} $ whenever $(k, j_1, \dots, j_n)$ has at least two different coordinates.
    For each $i =1, \dots, n$, $x_i : = \sum\limits_{j=1}^\infty a_i^j x_i^j$ converges in $E_i$ because $E_i$ has the $\ell_{r_i}$-composition property (the convergence of the series follows from Remark \ref{rema}(7)). Hence, letting $\displaystyle b = \sup_{j \in \N} b_j$, $a = \displaystyle \max_{1 \leq i \leq n} \sup_{j \in \N} a_i^j$ and $h_k = \displaystyle \sum_{j=1}^k b_j y_j^*$ for each $k \in \N$, we get
\begin{align*}
    |h_k(A(x_1, \dots, x_n))| & = \left |\sum_{j=1}^k b_j y_j^* (A(x_1, \dots, x_n)) \right | \\ &=  \left | \sum_{j=1}^k \sum_{j_1, \dots, j_n=1}^\infty  b_j a_{1}^{j_1} \cdots a_{n}^{j_n} \, y_j^*(A(x_1^{j_1}, \dots, x_n^{j_n})) \right | \\
    & \geq \left | \sum_{j=1}^k  b_j a_{1}^{j} \cdots a_{n}^{j} \, y_j^*(A(x_1^{j}, \dots, x_n^{j})) \right | - \\
    & - \sum_{j=1}^k \sum_{(j_1, \dots, j_n) \neq (j, \dots, j)} |b_j a_{1}^{j_1} \cdots a_{n}^{j_n} \, y_j^*(A(x_1^{j_1}, \dots, x_n^{j_n}))| \\
    & \geq \varepsilon \sum_{j=1}^k  b_j a_{1}^{j} \cdots a_{n}^{j} \, -  \sum_{(j,j_1, \dots, j_n) \neq (j, j, \dots, j)} b_j a_{1}^{j_1} \cdots a_{n}^{j_n} \, |y_j^*(A(x_1^{j_1}, \dots, x_n^{j_n}))| \\
    & \geq \varepsilon \sum_{j=1}^k  b_j a_{1}^{j} \cdots a_{n}^{j} \, -  b a^n \cdot \sum_{(j,j_1, \dots, j_n) \neq (j, j, \dots, j)} 2^{- \max \{ j, j_1, \dots, j_n \}} \\
    & \geq \varepsilon \sum_{j=1}^k  b_j a_{1}^{j} \cdots a_{n}^{j} \, -  b  a^n \cdot \sum_{l=1}^\infty 2^{-l} \to \infty  \text{ as $k \to \infty$}.
\end{align*}
However, since $F^*$ has the $\ell_s$-composition property, the limit $\displaystyle \lim_{k \to \infty} h_k = \sum_{j=1}^\infty b_j y_k^*$ exists in $F^*$ by Remark \ref{rema}(7). This contradiction completes the proof.
\end{proof}

Now we are in the position to prove Theorem \ref{teo2}.

\medskip

\noindent \textit{Proof of Theorem \ref{teo2}.}  By assumption, $E_1, \dots, E_n$ and $F$ be Banach lattices so that $\sum\limits_{i=1}^n \frac{1}{s(E_i)} < \frac{1}{\sigma(F)}$. We begin by noticing that it is enough to check that $A$ is $M$-weakly compact, because for every $k \in \{1, \dots, n-k\}$ and all indexes $i_1, \dots, i_{n-k}$, it holds
$$\displaystyle \sum_{j=1}^{n-k} \frac{1}{s(E_{i_j})} = \sum_{i=1}^n \frac{1}{s(E_i)} < \frac{1}{\sigma(F)}.$$
As in the proof of Proposition \ref{lemamweak2}, $F$ has order continuous norm, hence it is Dedekind complete, and, for each $i=1, \dots, n$, $E_i$ has the $\ell_{p_i}$-composition property ($1 < p_i < s(E_i)$). Let $A \in \mathcal{L}(E_1, \dots, E_n; F)$ be given and let $(x_1^k)_k \subset B_{E_1}, \dots, (x_n^k)_k \subset B_{E_k}$ be disjoint sequences. To prove that $\displaystyle \lim_{k \to \infty} A(x_1^k, \dots, x_n^k) = 0$, it is sufficient to prove that $|A(x_1^k, \dots, x_n^k)| \cvf 0$ in $F$ and that $\displaystyle \lim_{k \to \infty} y_k^\ast(A(x_1^k, \dots, x_n^k)) = 0$ for every positive norm bounded disjoint sequence $(y_k^*)_k \subset F^*$ (see \cite[Corollary 2.6]{doddsfremlin}). The second condition follows from Proposition \ref{lemamweak2}, leaving us to check that $|A(x_1^k, \dots, x_n^k)| \cvf 0$ in $F$. On the one hand,
Proposition \ref{lema2}  yields that $A(x_1^k, \dots, x_n^k) \cvf 0$ in $F$, and assuming that $F$ is atomic, we obtain from \cite[Proposition 2.5.23]{meyer} that $|A(x_1^k, \dots, x_n^k)| \cvf 0$ in $F$, proving statement (1) of the theorem.  On the other hand, supposing that $A$ is positive, we get from Proposition \ref{lema2} that $(A(|x_1^k|, \dots, |x_n^k|))_k$ is a weakly null sequence in $F$, and so the inequality
 $|A(x_1^k, \dots, x_n^k)| \leq A(|x_1^k|, \dots, |x_n^k|)$ yields that $|A(x_1^k, \dots, x_n^k)| \cvf 0$ in $F$ for any positive $n$-linear operator $A: E_1 \times \cdots \times E_n  \to F$, proving that every positive $n$-linear operator from $E_1 \times \cdots \times E_n$ into $F$ is $M$-weakly compact. Now, statement (2) of the theorem follows by decomposing a regular operator into its positive and  negative parts.
\qed

\bigskip

The following examples arise from Theorems \ref{teo1} and \ref{teo2}.

\begin{examples}\rm \label{exemplos1}
{\rm (1)} Let $n \in \N$ be given. By Theorem \ref{teo1}, every continuous $n$-linear operator $A: L_{p_1}(\mu_1) \times \cdots \times L_{p_n}(\mu_n) \to L_q([0,1])$ is $M$-weakly compact for all $p_1, \dots, p_n \in (2n, \infty)$, $1 \leq q \leq 2$, and all measures $\mu_1, \dots, \mu_n$.
\\
{\rm (2)} By Theorem \ref{teo2}(1), if $ \sum\limits_{i=1}^n \frac{1}{p_i} < \frac{1}{q}$, then every continuous $n$-linear operator $A: L_{p_1}(\mu_1) \times \cdots \times L_{p_n}(\mu_n) \to \ell_q(I)$ is strongly $M$-weakly compact for all measures $\mu_1, \dots, \mu_n$ and every index set $I$. \\
{\rm (3)} Fix $1 < p < 2$ and consider a Lorentz sequence $d(w,p)$ as a Banach lattice with the order induced by its $1$-unconditional basis. Then, $d(w,p)$ is atomic. It follows from
Theorem \ref{teo2}(1) and Example \ref{exindices} that, if $\sum\limits_{i=1}^n \frac{1}{p_i} < \frac{1}{2}$, then every continuous $n$-linear operator $A: L_{p_1}(\mu_1) \times \cdots \times L_{p_n}(\mu_n) \to d(w,p)$ is strongly $M$-weakly compact for all measures measures $\mu_1, \dots, \mu_n$. \\
{\rm (4)} If $\sum\limits_{i=1}^n \frac{1}{p_i} < \frac{1}{q}$, then every regular $n$-linear operator $A: L_{p_1}(\mu_1) \times \cdots \times L_{p_n}(\mu_n) \to L_q(\nu)$ is  strongly $M$-weakly compact for all measures $\mu_1, \dots, \mu_n, \nu$. \\
{\rm (5)} Given a Banach space $X$, the free Banach lattice generated by $X$ is a Banach lattice ${\rm FBL}[X]$ equipped with a linear isometric embedding $\phi_X: X \to {\rm FBL}[X]$ such that for every bounded linear operator from $X$ to an arbitrary Banach lattice $F$, there exists a lattice homomorphism $\widehat{T}: FBL[X] \to F$ such that $\widehat{T} \circ \phi_X = T$ and $\norma{\widehat{T}} = \norma{T}$. The notion of free Banach lattices appeared in \cite{aviles}. For recent developments, see \cite{garcialeung, garciapedro, oikhbergjfa, oikhberg}. Given $n \in \N$, $1 < p_1, \dots, p_n < 2$, $1 \leq q < \infty$, and measures $\mu_1, \dots, \mu_n, \nu$, we get from \cite[Corollary 9.31]{oikhberg} that
$$s({\rm FBL}[L_{p_i}(\mu_i)]) = p_i \text{ for every $i = 1, \dots, n$}.$$
By Theorem \ref{teo2},
every regular $n$-linear operator
$$ A: {\rm FBL}[L_{p_1}(\mu_1)] \times \cdots \times {\rm FBL}[L_{p_n}(\mu_n)] \to L_q(\nu) $$
is strongly $M$-weakly compact whenever $\sum\limits_{i=1}^n \frac{1}{p_i} < \frac{1}{q}$.
Also, every continuous $n$-linear operator
$$ A: {\rm FBL}[L_{p_1}(\mu_1)] \times \cdots \times {\rm FBL}[L_{p_n}(\mu_n)] \to \ell_q $$
is  strongly $M$-weakly compact whenever $\sum\limits_{i=1}^n \frac{1}{p_i} < \frac{1}{q}$. \\
{\rm (6)} Let $F$ be a Banach lattice with $2 \leq s(F) < \infty$, for instance $F = L_q(\nu)$ for every positive measure $\nu$ and every $q \geq 2$. By \cite[Corollary 9.31]{oikhberg}, $s({\rm FBL}[F]) = \min \{ 2, s(F) \} = 2$, and by Remark \ref{rema}(11) we obtain that $\sigma (({\rm FBL}[F])^*) = 2$. Thus, given $1 < p_1, \dots, p_n < \infty$ and measures $\mu_1, \dots, \mu_n$, we get from Theorem \ref{teo2} that every regular $n$-linear operator
$$ A: L_{p_1}(\mu_1) \times \cdots \times L_{p_n}(\mu_n) \to  ({\rm FBL}[F])^* $$
is strongly $M$-weakly compact whenever $\sum\limits_{i=1}^n \frac{1}{p_i} < \dfrac{1}{\sigma(({\rm FBL}[F])^*)} = \frac{1}{2}$.

\end{examples}

\section{Main results}

To prove our main result, the one stated in the Abstract, we shall use the following theorem that gives sufficient conditions for a positive strongly $M$-weakly compact multilinear operator to be compact. Throughout this section,  $E_1, \ldots, E_n$ and $F$ are Banach lattices.

\begin{theorem} \label{corcomp}
    Let $A: E_1 \times \cdots \times E_n \to F$ be a positive strongly $M$-weakly compact $n$-linear operator. If one of the following conditions hold, then $A$ is compact: \\
    {\rm (1)} $E_1, \dots, E_m$ are atomic with order continuous norms. \\
    {\rm (2)} $F$ is atomic with order continuous norm.
\end{theorem}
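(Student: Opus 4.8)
The plan is to reduce the statement to showing that $K:=A(B_{E_1}^+\times\cdots\times B_{E_n}^+)$ is relatively compact, and then to induct on $n$, at each step splitting every variable into a ``head'' and a ``tail'' and treating the resulting $2^n$ terms of the expansion of $A(x_1,\dots,x_n)$ separately. The reduction is immediate: writing $x_i=x_i^+-x_i^-$ with $x_i^\pm\in B_{E_i}^+$ and expanding by multilinearity shows that $A(B_{E_1}\times\cdots\times B_{E_n})$ sits inside a finite $\pm$-combination of copies of $K$. I would also use repeatedly the elementary remark that fixing any coordinates of $A$ at positive vectors again yields a positive strongly $M$-weakly compact operator of lower arity, so that the induction hypothesis applies to the curried operators.

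For part \emph{(1)}, since each $E_i$ is atomic with order continuous norm I may (after restricting to the bands generated by the countably many atoms met by a given sequence) assume $E_i$ has atoms $(e^{(i)}_m)_m$, with $P^{(i)}_m$ the norm-one band projection onto $\mathrm{span}\{e^{(i)}_1,\dots,e^{(i)}_m\}$, so that $\|(I-P^{(i)}_m)x\|\to0$ for each $x\in E_i$. Given $x_i^k\in B_{E_i}^+$, I write $x_i^k=P^{(i)}_m x_i^k+(I-P^{(i)}_m)x_i^k$ and expand into the terms $A(y^{S,k})$, $S\subseteq\{1,\dots,n\}$, whose $i$-th entry is the head $P^{(i)}_m x_i^k$ when $i\notin S$ and the tail $(I-P^{(i)}_m)x_i^k$ when $i\in S$. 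The term $S=\varnothing$ lies in $A$ applied to a product of balls of finite-dimensional spaces, hence in a relatively compact subset of $F$. For $S=\{1,\dots,n\}$ I would show that $\sup\{\|A(w_1,\dots,w_n)\|:w_i\in(I-P^{(i)}_m)B_{E_i}^+\}\to0$ as $m\to\infty$: otherwise, truncating each high-supported $w_i$ to a finite window of atoms and letting the windows march to infinity produces disjoint sequences in $B_{E_1},\dots,B_{E_n}$ on which $A$ stays bounded away from $0$, contradicting that $A$ is $M$-weakly compact. For a proper nonempty $S$, the heads $P^{(i)}_m x^k_i$ ($i\notin S$) range in a fixed finite-dimensional space, so along a subsequence (in $k$, with $m$ fixed) they converge, say to $v^{(m)}_i\in E_i^+$; freezing them there turns the term into $B^{(m)}_S\big((I-P^{(i)}_m)x^k_i:i\in S\big)+o(1)$, where $B^{(m)}_S$ is $A$ with the coordinates $i\notin S$ set equal to $v^{(m)}_i$, a positive strongly $M$-weakly compact operator of arity $|S|<n$, hence compact by the induction hypothesis; so this term has a convergent subsequence. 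Now fix $\varepsilon>0$, choose $m$ so that the $S=\{1,\dots,n\}$ term has norm $<\varepsilon$, and pass to a common subsequence along which all the other terms converge; then $(A(x_1^k,\dots,x_n^k))$ lies within $\varepsilon$ of a convergent sequence. A diagonal argument over $\varepsilon\downarrow0$ produces a Cauchy, hence convergent, subsequence, so $K$ is relatively compact.

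For part \emph{(2)} the $E_i$ are arbitrary, so instead of band projections in the domains I would use a Meyer--Nieberg-type splitting. The key lemma here is a multilinear version of the characterisation of $M$-weak compactness: for every $\varepsilon>0$ there exist $u_i\in E_i^+$ such that $\|A(x_1,\dots,(x_j-u_j)^+,\dots,x_n)\|\le\varepsilon$ for every $j$ and all $x_1\in B_{E_1}^+,\dots,x_n\in B_{E_n}^+$ --- i.e. replacing any single coordinate by its excess over the corresponding $u_j$ makes $A$ uniformly small; this should be obtained from the usual disjointification technique, using that $A$ is strongly $M$-weakly compact. Granting it, for $x_i\in B_{E_i}^+$ I decompose $x_i=x_i\wedge u_i+(x_i-u_i)^+$ and expand $A(x_1,\dots,x_n)$ into $2^n$ terms: the term $A(x_1\wedge u_1,\dots,x_n\wedge u_n)$ lies in $[0,A(u_1,\dots,u_n)]$ (because $A$ is positive and $0\le x_i\wedge u_i\le u_i$), and this order interval is norm-compact since $F$ is atomic with order continuous norm; each of the other $2^n-1$ terms has some coordinate equal to $(x_j-u_j)^+$ and the rest in $B_{E_i}^+$, hence norm $\le\varepsilon$. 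Thus $K\subseteq[0,A(u_1,\dots,u_n)]+(2^n-1)\varepsilon B_F$ for every $\varepsilon>0$, so $K$ is totally bounded.

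The main obstacle in both parts is the treatment of the ``mixed'' terms, in which some coordinates are heads and others tails: one has to decouple the two kinds of coordinate. In part \emph{(1)} this is achieved by using that the head coordinates lie in finite-dimensional spaces (hence converge along a subsequence) together with the induction on the number of variables and the fact that fixing variables preserves positivity and strong $M$-weak compactness; in part \emph{(2)} it is the multilinear Meyer--Nieberg characterisation that performs the decoupling, and proving it --- and checking that, combined with the norm-compactness of order intervals in the atomic order continuous lattice $F$, it is exactly what is needed --- is where the real work lies, the conclusion then running in parallel with the linear result of Chen and Wickstead.
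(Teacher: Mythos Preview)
Your proposal is correct, but it diverges from the paper's route in an instructive way. The paper proves a single key lemma (Lemma~4.3): for every $\varepsilon>0$ there are $z_i\in E_i^+$ with $A(B_{E_1}\times\cdots\times B_{E_n})\subset A([-z_1,z_1]\times\cdots\times[-z_n,z_n])+\varepsilon B_F$; this is obtained by iterating a weaker lemma (Lemma~4.2, which only controls the term with \emph{all} coordinates replaced by their tails) over the curried operators of successively lower arity --- precisely the disjointification machinery you allude to. Both cases of the theorem then follow in one line: in~(1) each interval $[-z_i,z_i]$ is norm-compact in $E_i$, so $A$ of their product is relatively compact; in~(2) positivity of $A$ gives $A([-z_1,z_1]\times\cdots\times[-z_n,z_n])\subset[-z,z]$ for some $z\in F^+$, and this interval is compact in $F$. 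Your key lemma for part~(2) --- that a \emph{single} tail in any slot already makes $A$ uniformly small on the product of positive balls --- is a sharper statement than Lemma~4.3, but it is provable by the same inductive scheme (decompose the remaining coordinates via the lower-arity case and take suprema of the resulting thresholds), so the work involved is comparable; once it is in hand your part~(2) is essentially the paper's argument. Your part~(1), by contrast, is much more elaborate than necessary: the induction on $n$, band projections onto finite spans of atoms, the case analysis over subsets $S$, and the final diagonal extraction all work, but your own key lemma would dispatch part~(1) just as it dispatches part~(2), since the head term $A(x_1\wedge u_1,\dots,x_n\wedge u_n)$ lies in $A([0,u_1]\times\cdots\times[0,u_n])$, which is relatively compact because each $[0,u_i]$ is compact in $E_i$. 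The paper's gain is unification --- one lemma, two one-line applications --- whereas your version deploys two distinct mechanisms where one would suffice.
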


In order to prove Theorem \ref{corcomp}, we will need the following two lemmas.


\begin{lemma} \label{lemacomp1}
  If $A: E_1 \times \cdots \times E_n \to F$ is a positive $M$-weakly compact $n$-linear operator, then for all norm bounded sets $A_1 \subset E_1, \dots, A_n \subset E_n$ and $\varepsilon > 0$, there exist $y_1 \in E_1^+, \dots, y_n \in E_n^+$ so that  $$ \norma{A((|x_1| - y_1)^+, \dots, (|x_n| - y_n)^+)} < \varepsilon \quad \text{for all $x_1 \in A_1, \dots, x_n \in A_n$}.$$
\end{lemma}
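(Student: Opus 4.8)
The plan is to argue by contradiction and to build, by a gliding–hump construction carried out \emph{simultaneously in all $n$ factors}, disjoint bounded sequences whose image under $A$ stays bounded away from $0$, contradicting the $M$-weak compactness of $A$. First some reductions: since $(|x_i|-y_i)^+$ depends on $x_i$ only through $|x_i|$ and each $A_i$ is norm bounded, I may assume, after replacing each $x_i$ by $|x_i|$ and rescaling, that $A_i\subseteq B_{E_i}$; it then suffices to produce, for a given $\varepsilon>0$, positive vectors $y_i\in E_i^+$ with $\|A((u_1-y_1)^+,\dots,(u_n-y_n)^+)\|<\varepsilon$ for all $u_i\in A_i\cap E_i^+$. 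Suppose this fails: there is $\varepsilon>0$ such that for every $(y_1,\dots,y_n)\in E_1^+\times\cdots\times E_n^+$ one can choose $u_i\in A_i\cap E_i^+$ with $\|A((u_1-y_1)^+,\dots,(u_n-y_n)^+)\|\ge\varepsilon$.

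The basic tool is the elementary lattice inequality $(a-cb)^+\wedge b\le \frac1c\,a$, valid for all $a,b\in E^+$ and $c\ge1$ (it follows from $cb=a\wedge cb+(cb-a)^+$, the disjointness $(a-cb)^+\perp(cb-a)^+$, and $x\wedge(y+z)\le x\wedge y+x\wedge z$). Using the failure hypothesis repeatedly, I choose $u_i^k\in A_i\cap E_i^+$ inductively so that, taking $y_i:=4^k\sum_{j<k}u_i^j$, the positive vectors $d_i^k:=(u_i^k-4^k\sum_{j<k}u_i^j)^+$ satisfy $0\le d_i^k\le u_i^k$ and $\|A(d_1^k,\dots,d_n^k)\|\ge\varepsilon$. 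Applying the inequality with $a=u_i^k$, $b=u_i^j$, $c=4^k$ gives $\|d_i^k\wedge d_i^j\|\le 4^{-\max\{k,j\}}$ for $j\ne k$, so for each fixed $i$ the bounded sequence $(d_i^k)_k\subset E_i^+$ is ``almost disjoint''.

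Now I disjointify: set $e_i^k:=\bigl(d_i^k-2^k\sum_{m\ne k}(d_i^k\wedge d_i^m)\bigr)^+$, the inner series converging absolutely by the previous estimate. Keeping only the single term indexed by $m$ in the subtracted sum and using translation–invariance of $\wedge$, one checks that $(e_i^k)_k$ is a disjoint sequence in $E_i^+$; moreover $0\le e_i^k\le d_i^k$ and $\|d_i^k-e_i^k\|\le 2^k\sum_{m\ne k}\|d_i^k\wedge d_i^m\|\to0$ as $k\to\infty$, for each $i$. Writing $A(d_1^k,\dots,d_n^k)-A(e_1^k,\dots,e_n^k)$ as a telescoping sum of $n$ terms, each having one entry equal to $d_i^k-e_i^k$ and the remaining entries of norm $\le1$, one gets $\|A(d_1^k,\dots,d_n^k)-A(e_1^k,\dots,e_n^k)\|\le\|A\|\sum_{i=1}^n\|d_i^k-e_i^k\|\to0$, hence $\|A(e_1^k,\dots,e_n^k)\|\ge\varepsilon/2$ for all large $k$. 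Since $(e_1^k)_k,\dots,(e_n^k)_k$ are disjoint sequences in $B_{E_1},\dots,B_{E_n}$, this contradicts the $M$-weak compactness of $A$ in the sense of Definition \ref{defmwc}, which completes the argument.

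I expect the main obstacle to be the disjointification step --- verifying that the $e_i^k$ are genuinely pairwise disjoint while $\|d_i^k-e_i^k\|\to0$, together with the bookkeeping that makes the multipliers $4^k$ and $2^k$ mesh. Conceptually, the key point is that the gliding hump must be run in \emph{all} coordinates at once, so that the $n$ sequences eventually feeding into Definition \ref{defmwc} are simultaneously disjoint; a coordinatewise induction is not available, since fixing some variables of an $M$-weakly compact multilinear operator need not leave it $M$-weakly compact.
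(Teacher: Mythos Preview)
Your argument is correct and follows the paper's overall plan: assume failure, use the negated hypothesis to build in all $n$ factors simultaneously the positive vectors $d_i^k=(u_i^k-4^k\sum_{j<k}u_i^j)^+$ with $\|A(d_1^k,\dots,d_n^k)\|\ge\varepsilon$, then disjointify and contradict $M$-weak compactness. The only substantive difference is the disjointification step. The paper appeals directly to \cite[Lemma~4.35]{alip}: with $x_j=\sum_k 2^{-k}|x_j^k|$ the vectors $u_j^k=(|x_j^{k+1}|-4^k\sum_{i\le k}|x_j^i|-2^{-k}x_j)^+$ are automatically disjoint, and the order sandwich $0\le A(u_1^k,\dots,u_n^k)\le A(z_1^k,\dots,z_n^k)\le A(u_1^k+2^{-k}x_1,\dots,u_n^k+2^{-k}x_n)$ forces $A(z_1^k,\dots,z_n^k)\to0$. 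Your route instead first quantifies almost-disjointness via $(a-cb)^+\wedge b\le c^{-1}a$ (your sketch actually yields $(a-cb)^+\wedge cb\le a$; dividing the decomposition of $cb$ by $c$ before taking the meet gives the stated form), and then subtracts the weighted meets to get genuinely disjoint $e_i^k$ with $\|d_i^k-e_i^k\|\to0$. Your final step is a telescoping \emph{norm} estimate using only the boundedness of $A$, whereas the paper's sandwich uses the positivity of $A$. So the paper's proof is shorter (one citation replaces your whole disjointification), but your version has the bonus of not invoking $A\ge0$ at any point, hence it actually proves the lemma for every bounded $M$-weakly compact $n$-linear operator.
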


\begin{proof}
    Assuming that the thesis is false, there are norm bounded sets $A_1 \subset E_1, \dots, A_n \subset E_n$ and $\varepsilon > 0$ such that for all $y_1 \in E_1^+, \dots, y_n \in E_n^+$, we can find $x_1 \in A_1, \dots, x_n \in A_n$ such that
    $ \norma{A((|x_1| - y_1)^+, \dots, (|x_n| - y_n)^+)} \geq \varepsilon. $
    Fix $x_1^1 \in A_1, \dots, x_n^1 \in A_n$. So, there are $x_1^2 \in A_1, \dots, x_n^2 \in A_n$ such that
    $$ \norma{A((|x_1^2| - 4|x_1^1|)^+, \dots, (|x_n^2| - 4|x_n^1|)^+)} \geq \varepsilon. $$
    By induction, we may construct sequences $(x_j^k)_k \subset A_j$ for every $j = 1, \dots, n$ such that
     \begin{equation}\label{hm3s}\left\|A\left(\left(|x_1^{k+1}| - 4^k \sum_{i=1}^k|x_1^i|\right)^+, \dots, \left(|x_n^{k+1}| - 4^k \sum_{i=1}^k|x_n^i|\right)^+\right)\right\|\geq \varepsilon \text{~for every $k \in \N$}. \end{equation}
    For each $j=1, \dots, n$, define $x_j = \displaystyle \sum_{k=1}^\infty 2^{-k} |x_j^k|$,
    $$ z_j^k = \left ( |x_j^{k+1}| - 4^k \sum_{i=1}^k|x_j^i| \right )^+, \quad \text{and } \quad u_j^k = \left ( |x_j^{k+1}| - 4^k \sum_{i=1}^k|x_j^i| - 2^{-k}x_j \right )^+.  $$
    Thus, for each $j=1, \dots, n$, $(u_j^k)_k$ is a norm bounded disjoint sequence in $E_j$ (see \cite[Lemma 4.35]{alip}) such that
    $ 0 \leq u_j^k \leq z_j^k \leq u_j^k + 2^{-k}x_j$ for every $k \in \N$. Hence
    $$ 0 \leq A(u_1^k, \dots,  u_n^k) \leq A(z_1^k, \dots,  z_n^k) \leq A(u_1^k + 2^{-k}x_1, \dots,  u_n^k + 2^{-k}x_n)  $$
    holds for every $k \in \N$.
    On the one hand, since $A$ is $M$-weakly compact and $(u_1^k)_k, \dots, (u_n^k)_k$ are disjoint sequences, we have $\displaystyle \lim_{k \to \infty} A(u_1^k, \dots,  u_n^k) = 0$. On the other hand, since $\displaystyle \lim_{k \to \infty} 2^{-k} x_j = 0$ and  for every $j=1, \dots, n$, we get
    \begin{align*}
        \lim_{k \to \infty}
        A(u_1^k + 2^{-k}x_1, \dots,  u_n^k + 2^{-k}x_n) & = \lim_{k \to \infty} A(u_1^k, u_2^k + 2^{-k}x_2, \dots,  u_n^k + 2^{-k}x_n) \\
        & = \cdots = \lim_{k \to \infty} A(u_1^k, \dots,  u_n^k) = 0.
    \end{align*}
    Therefore, $\displaystyle \lim_{k \to \infty} A(z_1^k, \dots,  z_n^k) = 0$, which contradicts (\ref{hm3s}).
\end{proof}

\begin{lemma} \label{lemacomp2}
    Let $A: E_1 \times \cdots \times E_n \to F$ be a strongly $M$-weakly compact positive $n$-linear operator. Then, for each $\varepsilon > 0$ there are $z_1 \in E_1^+, \dots, z_n \in E_n^+$ such that
    $$A(B_{E_1} \times \cdots \times B_{E_n}) \subset A([-z_1, z_1] \times \cdots \times [-z_n, z_n]) + \varepsilon B_F.$$
\end{lemma}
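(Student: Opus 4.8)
The goal is to upgrade the "one-variable-at-a-time" control from Lemma~\ref{lemacomp1} into a single simultaneous domination statement. The plan is to proceed by induction on $n$, using the strong $M$-weak compactness to peel off one variable at a time. For $n=1$ this is exactly the linear statement \cite[Theorem 7.2]{doddsfremlin} (or can be extracted from Lemma~\ref{lemacomp1} with $n=1$): given $\varepsilon>0$, $M$-weak compactness of the linear operator $A$ together with Lemma~\ref{lemacomp1} applied to $A_1=B_{E_1}$ produces $y_1\in E_1^+$ with $\|A((|x|-y_1)^+)\|<\varepsilon$ for all $x\in B_{E_1}$; writing $x=(|x|\wedge y_1)\,\mathrm{sgn} - $ (error) and setting $z_1=y_1$, one gets $A(B_{E_1})\subset A([-z_1,z_1])+\varepsilon B_F$ after noting $|x| = |x|\wedge y_1 + (|x|-y_1)^+$ and $|x|\wedge y_1\in[0,z_1]$, so $Ax$ lies within $\varepsilon$ of $A$ of an element of $[-z_1,z_1]$ (using $|Ax - A(\text{something in }[-z_1,z_1])|\le A((|x|-y_1)^+)$ and positivity).

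\textbf{Inductive step.} Assume the statement holds for $(n-1)$-linear operators. Fix $\varepsilon>0$. First apply Lemma~\ref{lemacomp1} to the positive $M$-weakly compact $n$-linear operator $A$ with $A_i=B_{E_i}$ for all $i$ and tolerance $\varepsilon/2$: this yields $y_1\in E_1^+,\dots,y_n\in E_n^+$ with
$$\left\|A\big((|x_1|-y_1)^+,\dots,(|x_n|-y_n)^+\big)\right\|<\varepsilon/2\quad\text{for all }x_i\in B_{E_i}.$$
The key algebraic identity is that for any $x_i\in B_{E_i}$ one can write $|x_i| = w_i + v_i$ where $w_i := |x_i|\wedge y_i\in[0,y_i]$ and $v_i := (|x_i|-y_i)^+$. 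Expanding $A(|x_1|,\dots,|x_n|)$ multilinearly over these $2^n$ terms, the single term $A(w_1,\dots,w_n)$ lies in $A([0,y_1]\times\cdots\times[0,y_n])$, the term $A(v_1,\dots,v_n)$ has norm $<\varepsilon/2$ by the above, and every mixed term has at least one $v_i$ slot and at least one $w_i$ slot. For a mixed term, freeze the $w_i$-slots (they range over the order interval $[0,y_i]$, hence over a norm-bounded set, in fact over $\|y_i\|B_{E_i}$ after scaling) and view it as an $M$-weakly compact operator in the remaining $v$-variables; one would like each mixed term to also be small. Here is where I would instead restructure: rather than expanding all $2^n$ terms crudely, apply Lemma~\ref{lemacomp1} \emph{not} to $B_{E_i}$ but inductively so that the tails $v_i$ are uniformly small enough that \emph{all} mixed terms are controlled. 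Concretely, since $A$ is separately $M$-weakly compact, for fixed $i$ the map $x_i\mapsto A(\dots)$ with other variables in norm-bounded sets is $M$-weakly compact, so Lemma~\ref{lemacomp1} can be iterated to choose the $y_i$ in succession, each time shrinking the relevant tail, so that the sum of the norms of all mixed terms plus the pure-tail term is $<\varepsilon$. Then set $z_i := y_i$; since $w_i\in[0,z_i]\subset[-z_i,z_i]$ and, recalling $|A(x_1,\dots,x_n)-A(w_1,\dots,w_n)|\le A(|x_1|,\dots,|x_n|)-A(w_1,\dots,w_n) = \sum(\text{other terms})$, positivity gives $\|A(x_1,\dots,x_n)-A(w_1,\dots,w_n)\|<\varepsilon$, i.e. $A(x_1,\dots,x_n)\in A([-z_1,z_1]\times\cdots\times[-z_n,z_n])+\varepsilon B_F$, as desired.

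\textbf{Main obstacle.} The delicate point is the simultaneous control of the $2^n-2$ mixed terms: Lemma~\ref{lemacomp1} in one shot only bounds the pure-tail term $A(v_1,\dots,v_n)$, not the mixed ones, so the argument must genuinely exploit \emph{strong} (not merely plain) $M$-weak compactness — this is precisely why the hypothesis of Theorem~\ref{corcomp} is strong $M$-weak compactness. I expect the cleanest route is to prove the statement by a nested application: first use strong $M$-weak compactness to reduce to the $(n-1)$-variable case (fix the $n$-th variable in an order interval $[-z_n,z_n]$ obtained from the separately-$M$-weakly-compact linear operator in the $n$-th slot, applied uniformly over $B_{E_1}\times\cdots\times B_{E_{n-1}}$), then invoke the inductive hypothesis on the $(n-1)$-linear operator $(x_1,\dots,x_{n-1})\mapsto A(x_1,\dots,x_{n-1},z_n)$ — which is again positive and strongly $M$-weakly compact — to extract $z_1,\dots,z_{n-1}$, and finally patch the two approximations together with the triangle inequality, re-choosing $z_n$ slightly to absorb the accumulated error. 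The bookkeeping of which norm-bounded set each frozen variable ranges over (scaling order intervals $[-z_i,z_i]$ back into multiples of $B_{E_i}$ before reapplying Lemma~\ref{lemacomp1}) is the part that needs care but is routine.
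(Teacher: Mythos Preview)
Your overall strategy coincides with the paper's: decompose $|x_i|=(|x_i|\wedge y_i)+(|x_i|-y_i)^+$, expand multilinearly, and bound all the non-principal terms by iterating Lemma~\ref{lemacomp1} on the sub-operators obtained by freezing subsets of variables at the already-chosen $y$'s --- this is precisely where strong $M$-weak compactness is used, and the paper carries it out explicitly for $n=3$ (Lemma~\ref{lemacomp1} applied first to $A$, then to the three bilinear operators $A(\cdot,\cdot,y_3),A(\cdot,y_2,\cdot),A(y_1,\cdot,\cdot)$, then to the three linear operators $A(\cdot,a_2,a_3)$ etc.).

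There is, however, a genuine error in your final step. The displayed inequality
\[
|A(x_1,\dots,x_n)-A(w_1,\dots,w_n)|\le A(|x_1|,\dots,|x_n|)-A(w_1,\dots,w_n)
\]
is \emph{false} when some $x_i$ is not positive: already for $n=1$, $A=\mathrm{id}_{\mathbb R}$, $x_1=-1$, $y_1=2$, one gets $w_1=|x_1|\wedge y_1=1$, left side $=2$, right side $=0$. Positivity of $A$ only yields $|A(x_1,\dots,x_n)|\le A(|x_1|,\dots,|x_n|)$, which does not bound the distance to $A(w_1,\dots,w_n)$. The correct approximating point is $u_i:=(x_i^+\wedge z_i)-(x_i^-\wedge z_i)$; disjointness of $x_i^\pm$ gives $|u_i|\le z_i$, so $u_i\in[-z_i,z_i]$, and telescoping $A(x_1,\dots,x_n)-A(u_1,\dots,u_n)$ produces terms dominated by $A(z_1,\dots,z_{i-1},(|x_i|-z_i)^+,|x_{i+1}|,\dots,|x_n|)$ --- exactly the ``one-slot tail'' quantities that the nested application of Lemma~\ref{lemacomp1} is designed to make small.

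Your alternative inductive route in the last paragraph runs into the same obstacle in disguise: ``fixing the $n$-th variable in $[-z_n,z_n]$ uniformly over $B_{E_1}\times\cdots\times B_{E_{n-1}}$'' already requires a uniform one-slot estimate $\|A(|x_1|,\dots,|x_{n-1}|,(|x_n|-z_n)^+)\|<\delta$, which Lemma~\ref{lemacomp1} alone does \emph{not} deliver (it only controls simultaneous tails in all slots). Obtaining it forces the full nested iteration over sub-operators anyway, so the induction does not shortcut the paper's bookkeeping.
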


\begin{proof} The case $n = 2$ does not capture the main difficulties of the proof, so we prove the case $n =3$, in which the sensitive issues are handled. The argument will make it clear that the general case follows analogously.
    Let $\varepsilon > 0$ be given.
    Since $A : E_1 \times E_2 \times E_3 \to F$ is $M$-weakly compact, applying  Lemma \ref{lemacomp1} for the norm bounded sets $B_{E_1}, B_{E_2}$ and $B_{E_3}$, there are $y_1 \in E_1^+, y_2 \in E_2^+$ and $y_3 \in E_3^+$ such that
    $$ \norma{A((|x_1| - y_1)^+, (|x_2| - y_2)^+, (|x_3| - y_3)^+ )} \leq \frac\varepsilon7   \text{~~for all $x_1 \in B_{E_1}, x_2 \in B_{E_2}, x_3 \in B_{E_3}$}. $$
    Next, we apply Lemma \ref{lemacomp1} to the $M$-weakly compact operators $A(\cdot, \cdot, y_3): E_1 \times E_2 \to F$ and to the norm bounded sets $B_{E_1}$ and $B_{E_2}$ to obtain $u_1 \in E_1^+$ and $u_2 \in E_2^+$ such that
    $$ \norma{A((|x_1| - u_1)^+, (|x_2| - u_2)^+, y_3} \leq \frac\varepsilon7   \text{~~for all $x_1 \in B_{E_1}$ and $x_2 \in B_{E_2}$}. $$
    Analogously, there are $v_1 \in E_1^+$ and $v_3 \in E_3^+$ such that
    $$ \norma{A((|x_1| - v_1)^+, y_2, (|x_3| - v_3)^+} \leq \frac\varepsilon7   \text{~~for all $x_1 \in B_{E_1}$ and $x_3 \in B_{E_3}$}, $$
    and there are $w_2 \in E_2^+$ and $w_3 \in E_3^+$ such that
    $$ \norma{A(y_1, (|x_2| - w_2)^+, (|x_3| - w_3)^+} \leq \frac\varepsilon7   \text{~~for all $x_2 \in B_{E_2}$ and $x_3 \in B_{E_3}$}. $$
     Call $a_1 = y_1 \vee u_1 \vee v_1$, $a_2 = y_2 \vee u_2 \vee w_2$ and $a_3 = y_3 \vee v_3 \vee w_3$. Now, we will apply Lemma \ref{lemacomp1} with $n=1$ for $A(\cdot, a_2, a_3)$, $A(a_1, \cdot, a_3)$ and $A(a_1, a_2, \cdot)$ with respect to the norm bounded sets $B_{E_1}, B_{E_2}$ and $B_{E_3}$. Thus, there are $b_1 \in E_1^+$, $b_2 \in E_2^+$ and $b_3 \in E_3^+$ so that
    $$ \norma{A((|x_1| - b_1)^+, a_2, a_3} \leq \frac\varepsilon{21} \text{~~for all $x_1 \in B_{E_1}$}, $$
$$ \norma{A(a_1, (|x_2| - b_2)^+, a_3} \leq \frac\varepsilon{21}   \text{~~for all $x_2 \in B_{E_2}$}, $$
    and
    $$ \norma{A(a_1, a_2, (|x_3| - b_3)^+} \leq \frac\varepsilon{21}   \text{~~for all $x_3 \in B_{E_3}$}.$$
Define $z_1 = 13 a_1 \vee b_1$, $z_2 = a_2 \vee b_2$ and $z_3 = a_3 \vee b_3$.
    Let $x_1 \in B_{E_1}$, $x_2 \in B_{E_2}$ and $x_3 \in B_{E_3}$ be given. Using \cite[Theorem 1.7(1)]{alip}, the positivity of $A$ and the linearity of $A$ in each variable, we have
\begin{align*}
     A(x_1, & x_2, x_3)  \leq A(|x_1|, |x_2|, |x_3|) \\&  = A((|x_1| - y_1)^+ + |x_1| \wedge y_1, (|x_2| - y_2)^+ + |x_2| \wedge y_2, (|x_3| - y_3)^+ + |x_3| \wedge y_3) \\
     & \leq  A((|x_1| - y_1)^+, (|x_2| - y_2)^+, (|x_3| - y_3)^+) + A(|x_1|, |x_2|, y_3) + A(|x_1|, y_2, |x_3|) \\
     & + A(|x_1|, y_2, y_3) + A(y_1, |x_2|, |x_3|) + A(y_1,|x_2|, y_3) + A(y_1, y_2, |x_3|) + A(y_1, y_2, y_3).
\end{align*}
Let us investigate the terms $A(|x_1|, |x_2|, y_3)$, $A(|x_1|, y_2, |x_3|)$ and
$A(y_1, |x_2|, |x_3|)$ separately. By applying \cite[Theorem 1.7(1)]{alip}, and (again) the positivity of $A$ and the linearity  of $A$ in each variable of $A$, we have
\begin{align*}
    A(|x_1|, & |x_2|, y_3)  = A((|x_1| - u_1)^+ + |x_1| \wedge u_1, (|x_2| - u_2)^+ + |x_2| \wedge u_2, y_3) \\
    & \leq A((|x_1| - u_1)^+, (|x_2| - u_2)^+, y_3) + A(|x_1|, u_2, y_3) + A(u_1, |x_2|, y_3) + A(u_1, u_2, y_3) \\
    & \leq A((|x_1| - u_1)^+, (|x_2| - u_2)^+, a_3) + A(|x_1|, a_2, a_3) + A(a_1, |x_2|, a_3) + A(a_1, a_2, a_3).
\end{align*}
Analogously,
\begin{align*}
    A(|x_1|, & y_2, |x_3|)  = A((|x_1| - v_1)^+ + |x_1| \wedge v_1, y_2, (|x_3| - v_3)^+ + |x_3| \wedge v_3) \\
    & \leq A((|x_1| - v_1)^+, y_2, (|x_3| - v_3)^+) + A(|x_1|, y_2, v_3) + A(v_1, y_2, |x_3|) + A(v_1, y_2, v_3) \\
    & \leq A((|x_1| - v_1)^+, y_2, (|x_3| - v_3)^+) + A(|x_1|, a_2, a_3) + A(a_1, a_2, |x_3|) + A(a_1, a_2, a_3),
\end{align*}
and
\begin{align*}
    A(y_1, & |x_2|, |x_3|)  = A(y_1, (|x_2| - w_2)^+ + |x_2| \wedge w_2,(|x_3| - w_3)^+ + |x_3| \wedge w_3) \\
    & \leq A(y_1, (|x_2| - w_2)^+, (|x_3| - w_3)^+) + A(y_1, |x_2|, w_3) + A(y_1, w_2, |x_3|) + A(y_1, w_2, w_3) \\
    & \leq A(y_1, (|x_2| - w_2)^+, (|x_3| - w_3)^+) + A(a_1, |x_2|, a_3) + A(a_1, a_2, |x_3|) + A(a_1, a_2, a_3).
\end{align*}
    Combining the information above, we get
    \begin{align*}
      A(x_1, & x_2, x_3) \leq  A((|x_1| - y_1)^+, (|x_2| - y_2)^+, (|x_3| - y_3)^+) + A(|x_1|, |x_2|, y_3) + A(|x_1|, y_2, |x_3|) \\
     & + A(|x_1|, y_2, y_3) + A(y_1, |x_2|, |x_3|) + A(y_1,|x_2|, y_3) + A(y_1, y_2, |x_3|) + A(y_1, y_2, y_3) \\
     & \leq A((|x_1| - y_1)^+, (|x_2| - y_2)^+, (|x_3| - y_3)^+) + \\
     & + A((|x_1| - u_1)^+, (|x_2| - u_2)^+, a_3) + A(|x_1|, a_2, a_3) + A(a_1, |x_2|, a_3) + A(a_1, a_2, a_3) \\
     & + A((|x_1| - v_1)^+, y_2, (|x_3| - v_3)^+) + A(|x_1|, a_2, a_3) + A(a_1, a_2, |x_3|) + A(a_1, a_2, a_3) \\
     & + A(|x_1|, a_2, a_3) \\
     & + A(y_1, (|x_2| - w_2)^+, (|x_3| - w_3)^+) + A(a_1, |x_2|, a_3) + A(a_1, a_2, |x_3|) + A(a_1, a_2, a_3) \\
     & + A(a_1, |x_2|, a_3) + A(a_1, a_2, |x_3|) + A(a_1, a_2, a_3),
    \end{align*}
    that is
\begin{align*}
      A(x_1, & x_2, x_3) \leq A((|x_1| - y_1)^+, (|x_2| - y_2)^+, (|x_3| - y_3)^+) + A((|x_1| - u_1)^+, (|x_2| - u_2)^+, a_3) \\
      & + A((|x_1| - v_1)^+, y_2, (|x_3| - v_3)^+) + A(y_1, (|x_2| - w_2)^+, (|x_3| - w_3)^+) \\
     & + 3 A(|x_1|, a_2, a_3) + 3A(a_1, |x_2|, a_3) + 3 A(a_1, a_2, |x_3|) + 4A(a_1, a_2, a_3).
    \end{align*}
Now we handle the terms $A(|x_1|, a_2, a_3)$, $A(a_1, |x_2|, a_3)$ and $A(a_1, a_2, |x_3|)$ separately. Using once again \cite[Theorem 1.7(1)]{alip}, the positivity of $A$ and its linearity in each variable, we have
\begin{align*}
    A(|x_1|, a_2, a_3) & = A((|x_1| - b_1)^+, a_2, a_3) + A(|x_1|\wedge b_1, a_2, a_3) \\
    & \leq A((|x_1| - b_1)^+, a_2, a_3) + A(b_1, a_2, a_3) \\
    & \leq A((|x_1| - b_1)^+, a_2, a_3) + A(a_1 \vee b_1, a_2 \vee b_2, a_3 \vee b_3).
\end{align*}
Analogously
\begin{align*}
    A(a_1, |x_2|, a_3) & \leq A(a_1, (|x_2| - b_2)^+, a_3) + A(a_1 \vee b_1, a_2 \vee b_2, a_3 \vee b_3),
\end{align*}
and
\begin{align*}
    A(a_1, a_2, |x_3|) & \leq A(a_1, a_2, (|x_3| - b_3)^+) + A(a_1 \vee b_1, a_2 \vee b_2, a_3 \vee b_3).
\end{align*}
Combining the last four inequalities above, we obtain
\begin{align*}
      A(x_1, & x_2, x_3) \leq A((|x_1| - y_1)^+, (|x_2| - y_2)^+, (|x_3| - y_3)^+) + A((|x_1| - u_1)^+, (|x_2| - u_2)^+, a_3) \\
      & + A((|x_1| - v_1)^+, y_2, (|x_3| - v_3)^+) + A(y_1, (|x_2| - w_2)^+, (|x_3| - w_3)^+) \\
     & + 3 A((|x_1| - b_1)^+, a_2, a_3) + 3A(a_1 \vee b_1, a_2 \vee b_2, a_3 \vee b_3) \\
     & + 3A(a_1, (|x_2| - b_2)^+, a_3) + 3 A(a_1 \vee b_1, a_2 \vee b_2, a_3 \vee b_3) \\
     & + 3A(a_1, a_2, (|x_3| - b_3)^+) + 3A(a_1 \vee b_1, a_2 \vee b_2, a_3 \vee b_3)
     + 4A(a_1, a_2, a_3),
    \end{align*}
that is
\begin{align*}
     A(x_1, & x_2, x_3) \leq A((|x_1| - y_1)^+, (|x_2| - y_2)^+, (|x_3| - y_3)^+) + A((|x_1| - u_1)^+, (|x_2| - u_2)^+, a_3) \\
      & + A((|x_1| - v_1)^+, y_2, (|x_3| - v_3)^+) + A(y_1, (|x_2| - w_2)^+, (|x_3| - w_3)^+) \\
      & + 3 A((|x_1| - b_1)^+, a_2, a_3) + 3A(a_1, (|x_2| - b_2)^+, a_3) + 3 A(a_1, a_2, (|x_3| - b_3)^+) \\
      & + 13 A(a_1 \vee b_1, a_2 \vee b_2, a_3 \vee b_3).
\end{align*}
Recalling that $z_1 = 13 a_1 \vee b_1$, $z_2 = a_2 \vee b_2$ and $z_3 = a_3 \vee b_3$, from the inequality above together with the norm estimates obtained at the beginning of the proof, we have
\begin{align*}
    \norma{A(x_1 & , x_2, x_3) - A(z_1, z_2, z_3)}  \leq \frac{\varepsilon}{7} + \frac{\varepsilon}{7} + \frac{\varepsilon}{7} + \frac{\varepsilon}{7}  + 3\frac{\varepsilon}{21} + 3 \frac{\varepsilon}{21} + 3 \frac{\varepsilon}{21} = \varepsilon.
\end{align*}
    Therefore $A(x_1, x_2, x_3) - A(z_1, z_2, z_3) \in \varepsilon B_F$, and we are done.
\end{proof}

Now, we have all we need to present the proof of Theorem \ref{corcomp}.

\medskip

\noindent {\it Proof of Theorem \ref{corcomp}.} We shall use (twice) that a subset $K$ of a Banach space $X$ is relatively compact if and only if for every $\varepsilon > 0$ there is a relatively compact set $K_\varepsilon$ in $X$ such that $K \subset K_\varepsilon + \varepsilon B_X$ (see, e.g., \cite[p. 5]{diestelss}). By assumption, $A: E_1 \times \cdots \times E_n \to F$ is a positive strongly $M$-weakly compact $n$-linear operator. Let $\varepsilon > 0$ be given. By Lemma \ref{lemacomp2} there are $y_1 \in E_1^+, \dots, y_n \in E_n^+$ such that
\begin{equation}\label{t8m2}A(B_{E_1} \times \cdots \times B_{E_n}) \subset A([-y_1,y_1]\times \cdots \times [-y_n, y_n]) + \varepsilon \, B_F.  \end{equation}

Suppose that $E_1, \dots, E_m$ are atomic with order continuous norms. In this case, the order intervals $[-y_1, y_1], \dots, [-y_n, y_n]$ are relatively compact in $E_1, \dots, E_n$, respectively (see \cite[Theorem 6.1]{WOrder}). So,  $[-y_1,y_1]\times \cdots \times [-y_n, y_n]$ is relatively compact in $E_1 \times \cdots \times E_n$, and the continuity of $A$ yields that $A([-y_1,y_1]\times \cdots \times [-y_n, y_n])$ is relatively compact in $F$. Together with (\ref{t8m2}), this proves that $A(B_{E_1} \times \cdots \times B_{E_n})$ is relatively compact, hence $A$ is a compact operator.

 Assume now that $F$ is atomic with order continuous norm. Since $A$ is positive, $A$ is order bounded, so there exists $z \in F$ such that $A([-y_1,y_1]\times \cdots \times [-y_n, y_n]) \subset [-z,z]$. By (\ref{t8m2}) we have
$$ A(B_{E_1} \times \cdots \times B_{E_n}) \subset [-z,z] + \varepsilon \, B_F.  $$
Finally, it follows from \cite[Theorem 6.1]{WOrder} that $[-z,z]$ is relatively compact in $F$, hence $A$ is a compact operator. \qed

\medskip

Now our main result follows from a combination of Theorem \ref{corcomp} and Examples \ref{exemplos1}:

%

\begin{theorem}  \label{exemplos2} Let $1 < p_1, \ldots, p_n < \infty, 1\leq q < \infty$ be given and let $\mu_1, \ldots, \mu_n, \nu$ be measures.\\
   {\rm (1)} All positive $n$-linear operators from $L_{p_1}(\mu_1) \times \cdots \times L_{p_n}(\mu_n)$ to $\ell_q$ and from $\ell_{p_1} \times \cdots \times \ell_{p_n}$ to $L_q(\nu)$ are compact whenever $\sum\limits_{i=1}^n \frac{1}{p_i} < \frac{1}{q}$. \\
   {\rm (2)} All positive $n$-linear operators from ${\rm FBL}[L_{p_1}(\mu_1)] \times \cdots \times {\rm FBL}[L_{p_n}(\mu_n)]$ to $\ell_q $
are compact  whenever $1 < p_1, \dots, p_n < 2$, $1 \leq q < \infty$ and
$\sum\limits_{i=1}^n \frac{1}{p_i} < \frac{1}{q}$.\\
{\rm (3)} All positive $n$-linear operators from $\ell_{p_1} \times \cdots \times \ell_{p_n}$ to $({\rm FBL}[L_{q}(\nu)])^* $
are compact whenever $2 \leq q < \infty$ and $\sum\limits_{i=1}^n \frac{1}{p_i} < \dfrac{1}{\sigma(({\rm FBL}[L_{2}(\nu)])^*)} = \frac{1}{2}$. The same holds if we replace $L_q(\mu)$ with a Banach lattice $F$ such that $2 \leq s(F) < \infty$.
\end{theorem}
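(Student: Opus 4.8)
The plan is to derive every assertion by feeding the strong $M$-weak compactness statements already recorded in Examples \ref{exemplos1} into Theorem \ref{corcomp}. The scheme is uniform: for each listed pair of domain and target spaces I first quote the appropriate item of Examples \ref{exemplos1} to conclude that \emph{every} continuous (or every regular) $n$-linear operator between those spaces is strongly $M$-weakly compact; then I observe that a positive $n$-linear operator is in particular regular, hence strongly $M$-weakly compact; finally I invoke Theorem \ref{corcomp}, using branch (1) whenever the domain factors are atomic with order continuous norm and branch (2) whenever the target is, to upgrade ``positive and strongly $M$-weakly compact'' to ``compact''.

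For part (1): if the target is $\ell_q$, then Example \ref{exemplos1}(2) — which rests on $s(L_{p_i}(\mu_i)) = p_i$, $\sigma(\ell_q) = q$ and Theorem \ref{teo2}(1) — shows that every continuous $A \colon L_{p_1}(\mu_1)\times\cdots\times L_{p_n}(\mu_n)\to \ell_q$ is strongly $M$-weakly compact under $\sum_{i}\frac1{p_i} < \frac1q$; since $\ell_q$ ($1\le q<\infty$) is atomic with order continuous norm, Theorem \ref{corcomp}(2) gives compactness of the positive ones, which in particular covers the case of domain factors $\ell_{p_i}$. If instead the domain factors are the $\ell_{p_i}$ and the target is $L_q(\nu)$, I use Example \ref{exemplos1}(4) to obtain that every regular, hence every positive, $A \colon \ell_{p_1}\times\cdots\times\ell_{p_n}\to L_q(\nu)$ is strongly $M$-weakly compact under the same index condition, and since each $\ell_{p_i}$ ($1<p_i<\infty$) is atomic with order continuous norm, Theorem \ref{corcomp}(1) applies.

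Parts (2) and (3) run the same way. For (2), Example \ref{exemplos1}(5) (using $s({\rm FBL}[L_{p_i}(\mu_i)])=p_i$ for $1<p_i<2$, from \cite[Corollary 9.31]{oikhberg}) yields strong $M$-weak compactness of every continuous $n$-linear operator into $\ell_q$ when $\sum_i\frac1{p_i}<\frac1q$, and Theorem \ref{corcomp}(2) finishes, $\ell_q$ being atomic with order continuous norm. For (3), Example \ref{exemplos1}(6) gives that every regular $n$-linear operator from $\ell_{p_1}\times\cdots\times\ell_{p_n}$ into $({\rm FBL}[F])^*$, for any Banach lattice $F$ with $2\le s(F)<\infty$ — in particular $F=L_q(\nu)$ with $q\ge 2$, for which $\sigma(({\rm FBL}[F])^*)=2=\sigma(({\rm FBL}[L_2(\nu)])^*)$ — is strongly $M$-weakly compact when $\sum_i\frac1{p_i}<\frac12$; positivity supplies regularity, and since each $\ell_{p_i}$ is atomic with order continuous norm, Theorem \ref{corcomp}(1) gives compactness.

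I do not expect a genuine obstacle: all the analytic content sits in Theorems \ref{teo1}, \ref{teo2} and \ref{corcomp}, and what remains is bookkeeping — in each case identifying which side (domain or target) is atomic with order continuous norm so as to select the right branch of Theorem \ref{corcomp}, and matching the index hypotheses against the values $s(\ell_p)=\sigma(\ell_p)=p=s(L_p(\mu))$, $\sigma(L_q(\nu))=q$ and the free-Banach-lattice indices from \cite[Corollary 9.31]{oikhberg}. The one point that deserves to be spelled out is the equality $\sigma(({\rm FBL}[L_q(\nu)])^*)=2$ for every $q\ge 2$ (because $s({\rm FBL}[L_q(\nu)])=\min\{2,q\}=2$), which is exactly why the bound in (3) is phrased through $\sigma(({\rm FBL}[L_2(\nu)])^*)$.
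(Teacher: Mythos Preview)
Your proposal is correct and follows exactly the route the paper takes: the paper's own proof of Theorem \ref{exemplos2} is the one-line remark that the result ``follows from a combination of Theorem \ref{corcomp} and Examples \ref{exemplos1}'', and your write-up is precisely a detailed unpacking of that combination, matching each case to the appropriate item of Examples \ref{exemplos1} and the appropriate branch (1) or (2) of Theorem \ref{corcomp}.
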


We conclude our manuscript with two applications of Theorem \ref{exemplos2}. Recall that a $n$-homogeneous polynomial $P: E \to F$ between Banach lattices is said to be positive if its associated symmetric $n$-linear operator $T_P: E^n \to F$ is positive. A homogeneous polynomial is regular if it is the difference of two positive polynomials. By $\mathcal{P}^r(^n E; F)$ we denote the space of regular $n$-homogeneous polynomials from $E$ to $F$. Details can be found in \cite{bubuskes, loane}.

\begin{corollary} Let $n \in \N$ and $1 \leq p,q < \infty$ be such that $q < np$, and let $\mu$ be a measure. Then, every positive $n$-homogeneous polynomial $P: L_p(\mu) \to \ell_q$ is compact, that is, $P(B_{L_p(\mu)})$ is a relatively compact subset of $\ell_q$. In this case, $\mathcal{P}^r(^n L_p(\mu); \ell_q)$ does not contain a copy of $c_0$.
\end{corollary}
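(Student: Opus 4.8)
The plan is to deduce the corollary from Theorem \ref{exemplos2}(1) by passing from the polynomial $P$ to its associated symmetric $n$-linear operator $T_P \colon L_p(\mu)^n \to \ell_q$. Since $P$ is positive by definition, $T_P$ is a positive $n$-linear operator from $L_p(\mu) \times \cdots \times L_p(\mu)$ to $\ell_q$, and the hypothesis $q < np$ is exactly $\sum_{i=1}^n \frac{1}{p} = \frac{n}{p} < \frac{1}{q}$. Hence Theorem \ref{exemplos2}(1) applies and gives that $T_P$ is compact, i.e. $T_P(B_{L_p(\mu)}^n)$ is relatively compact in $\ell_q$. Since $P(x) = T_P(x, \dots, x)$, we have $P(B_{L_p(\mu)}) \subseteq T_P(B_{L_p(\mu)} \times \cdots \times B_{L_p(\mu)})$, and a subset of a relatively compact set is relatively compact; thus $P(B_{L_p(\mu)})$ is relatively compact in $\ell_q$, which is the first assertion. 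One small point to verify carefully is that $q$ may equal $1$ here (the corollary allows $1 \le q < \infty$), so one should make sure Theorem \ref{exemplos2}(1) is invoked in the range it actually covers; since that theorem is stated for $1 \le q < \infty$ this is fine.

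For the second assertion, that $\mathcal{P}^r({}^n L_p(\mu); \ell_q)$ contains no copy of $c_0$, the strategy is to first observe that every regular $n$-homogeneous polynomial $P = P_1 - P_2$ (difference of positive polynomials) is compact by the first part applied to $P_1$ and $P_2$ separately, so $\mathcal{P}^r({}^n L_p(\mu); \ell_q) = \mathcal{P}^r({}^n L_p(\mu); \ell_q) \cap \mathcal{P}_K({}^n L_p(\mu); \ell_q)$, where $\mathcal{P}_K$ denotes the compact $n$-homogeneous polynomials. The key structural fact to invoke is that $\mathcal{P}^r({}^n E; F)$ is a Banach lattice when $F$ is Dedekind complete (here $F = \ell_q$ is Dedekind complete), endowed with the regular norm, and that this Banach lattice has order continuous norm — or at least contains no copy of $c_0$ — precisely when all its elements are compact operators into a space like $\ell_q$. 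The cleanest route: a Banach lattice contains no copy of $c_0$ iff it is a KB-space, or one uses that an order-bounded disjoint sequence in a Banach lattice of regular polynomials all of whose members are compact must be norm-null, which forces $c_0 \not\hookrightarrow \mathcal{P}^r$.

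More concretely, I would argue as follows. Suppose $c_0$ embeds in $\mathcal{P}^r({}^n L_p(\mu); \ell_q)$; then there is a bounded sequence $(Q_k)_k$ of regular polynomials equivalent to the unit vector basis of $c_0$, in particular with $\sum_k Q_k$ \emph{weakly unconditionally Cauchy} but not unconditionally convergent. Since $\ell_q$ has order continuous norm and is a KB-space (as $1 \le q < \infty$), and since each $Q_k$ is compact, I would use the lattice structure to replace $(Q_k)$ by a positive disjoint sequence and derive a contradiction: the associated symmetric operators $T_{Q_k}$ are positive $n$-linear and compact, and evaluating against a fixed bounded net one shows the partial sums $\sum_{k \le N} Q_k(x)$ cannot stay bounded uniformly in $x \in B_{L_p(\mu)}$ while failing to converge, because compactness plus the $\ell_q$-valued target forces norm convergence on the unit ball. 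The main obstacle I anticipate is this last step — making rigorous the passage from ``every element is compact'' to ``no copy of $c_0$'' in the Banach lattice of regular polynomials; this likely requires either citing a known result that $\mathcal{P}_K({}^n E; F)$ (or $\mathcal{P}^r \cap \mathcal{P}_K$) contains no copy of $c_0$ when $F$ is a KB-space with order continuous norm, or reproducing a Kalton–Bessaga–Pełczyński type argument: a $c_0$-sequence would yield an order-bounded non-norm-null disjoint sequence, contradicting compactness via the fact that in $\ell_q$ order intervals are compact (so disjoint order-bounded sequences are norm-null). All the heavy analytic lifting has already been done in Theorem \ref{exemplos2}; the corollary is essentially a packaging result, with the $c_0$ part being the only place demanding genuine (though standard) Banach-lattice care.
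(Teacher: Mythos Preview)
For the first assertion your argument is exactly the paper's: pass to the associated symmetric $n$-linear map $T_P$, apply Theorem~\ref{exemplos2}(1), and use $P(B_{L_p(\mu)}) \subset T_P\big(B_{L_p(\mu)}^{\,n}\big)$. One slip worth flagging: you assert that ``the hypothesis $q < np$ is exactly $\frac{n}{p} < \frac{1}{q}$,'' but $\frac{n}{p} < \frac{1}{q}$ is equivalent to $nq < p$, not to $q < np$. The stated hypothesis in the corollary is in fact a typo and should read $nq < p$, which is what Theorem~\ref{exemplos2}(1) actually requires (and which also forces $p>1$, so the restriction $1 < p_i$ there causes no trouble).

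For the second assertion the paper gives no self-contained argument: it simply invokes an external result, \cite[Theorem~4.3]{botmirru}, which delivers ``no copy of $c_0$'' directly from the fact (just established) that every regular polynomial in the space is compact. Your sketch is more ambitious but, as you yourself acknowledge, incomplete: the passage from ``every regular polynomial is compact'' to ``$\mathcal{P}^r({}^nL_p(\mu);\ell_q)$ contains no copy of $c_0$'' is precisely the content of the cited theorem, and your proposed Bessaga--Pe\l{}czy\'nski/disjointification route is left as an outline with an admitted obstacle. That is a genuine gap --- filling it would amount to reproving the external reference. In summary: the first half matches the paper verbatim; for the second half the paper defers to a citation while your proposal remains a heuristic plan rather than a proof.
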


\begin{proof} The symmetric $n$-linear operator $T_P$ associated to $P$ is compact by Theorem \ref{exemplos2}(1). Since $P(B_{L_p(\mu)})\subset T_P\left( \left(B_{L_p(\mu)}\right)^n\right)$, $P$ is compact as well. The second statement follows from \cite[Theorem 4.3]{botmirru}.
\end{proof}


\begin{corollary} Let $2 < p < \infty$, $2 \leq q < \infty$ be given and let $\mu$ be a measure. Then, every positive linear operator from $\ell_p$ to ${(\rm FBL}[L_q(\mu)])^*$ is compact and is norm-attaining. The same holds if we replace $L_q(\mu)$ with a Banach lattice $F$ such that $2 \leq s(F) < \infty$.
\end{corollary}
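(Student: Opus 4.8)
The plan is to split the statement into its two assertions and obtain each as a short consequence of what is already proved. \emph{Compactness.} I would apply Theorem \ref{exemplos2}(3) in the case $n = 1$ with $p_1 = p$. By Examples \ref{exemplos1}(6) one has $\sigma(({\rm FBL}[F])^*) = 2$ for every Banach lattice $F$ with $2 \le s(F) < \infty$, and in particular $\sigma(({\rm FBL}[L_q(\mu)])^*) = 2$ for every $q \ge 2$. Since $p > 2$ gives $\frac{1}{p} < \frac{1}{2} = \frac{1}{\sigma(({\rm FBL}[L_q(\mu)])^*)}$, Theorem \ref{exemplos2}(3) applies verbatim and shows that every positive linear operator $T : \ell_p \to ({\rm FBL}[L_q(\mu)])^*$ is compact; the ``same holds'' part of Theorem \ref{exemplos2}(3) covers, with no change, the case in which $L_q(\mu)$ is replaced by a Banach lattice $F$ with $2 \le s(F) < \infty$.

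\emph{Norm attainment.} Here the key is the general fact that every compact linear operator from a reflexive Banach space into a Banach space attains its norm, which I would record with a short proof. If $T : X \to Y$ is compact, $X$ is reflexive and $T \neq 0$, choose $(x_m)_m \subset B_X$ with $\norma{Tx_m} \to \norma{T}$; since $B_X$ is convex and norm closed it is weakly closed, and by reflexivity it is weakly compact, so some subsequence $(x_{m_k})_k$ converges weakly to some $x \in B_X$. As $T$ is bounded, $Tx_{m_k} \to Tx$ weakly; as $T$ is compact, $(Tx_{m_k})_k$ is relatively norm compact, so each subsequence has a norm-convergent further subsequence whose limit must be $Tx$, whence $Tx_{m_k} \to Tx$ in norm and $\norma{Tx} = \lim_k \norma{Tx_{m_k}} = \norma{T}$ with $\norma{x} \le 1$ (and in fact $\norma{x} = 1$). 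Applying this with $X = \ell_p$, which is reflexive because $1 < p < \infty$, and $Y = ({\rm FBL}[L_q(\mu)])^*$ (respectively $Y = ({\rm FBL}[F])^*$), together with the compactness just established, yields that every positive linear operator from $\ell_p$ to $({\rm FBL}[L_q(\mu)])^*$ is norm attaining.

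\emph{On the main obstacle.} There is essentially none: the statement is a genuine corollary. The only two points worth a sentence of justification are the index identity $\sigma(({\rm FBL}[L_q(\mu)])^*) = 2$, already available from Examples \ref{exemplos1}(6) (which rests on \cite[Corollary 9.31]{oikhberg} and Remark \ref{rema}(11)), and the standard but worth-stating implication ``compact operator with reflexive domain $\Rightarrow$ norm attaining''; beyond these, the proof is just checking that the hypotheses of Theorem \ref{exemplos2}(3) are met.
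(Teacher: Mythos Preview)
Your proposal is correct. The compactness argument is exactly the paper's: both apply Theorem \ref{exemplos2}(3) with $n=1$, and your explicit check of the index condition via Examples \ref{exemplos1}(6) merely spells out what the paper takes for granted.

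For norm attainment the routes diverge slightly. The paper simply invokes \cite[Theorem 2.12]{luizmiranda}, whereas you give a self-contained proof of the classical fact that a compact linear operator with reflexive domain attains its norm. Your argument is standard and correct (the ``every subsequence has a further subsequence converging to $Tx$'' step is the usual way to upgrade subsequential to full convergence); it has the advantage of avoiding an external reference and of making transparent that positivity plays no role in this step. The paper's citation, on the other hand, presumably yields norm attainment under hypotheses tailored to the Banach-lattice setting, which may be of independent interest but is not needed here. Either route suffices; yours is the more elementary one.
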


\begin{proof} In this case, $\ell_p$ is a reflexive Banach lattice whose order is given by a basis, so every positive linear operator from $\ell_p$ to ${(\rm FBL}[L_q(\mu)])^*$ is compact by Theorem \ref{exemplos2}(3). The second statement follows from \cite[Theorem 2.12]{luizmiranda}.
\end{proof}

\medskip

\noindent \textbf{Acknowledgement.} Part of this note was prepared while the second author was visiting the Instituto de Matemática e Estatística at the Universidade Federal de Uberlândia in August 2025. He is deeply grateful to the intitute for its hospitality and support.

\noindent G. Botelho\\
Instituto de Matemática e Estatística\\
Universidade Federal de Uberlândia\\
38.400-902 -- Uberl\^andia -- Brazil\\
e-mail: botelho@ufu.br

\medskip

\noindent V. C. C. Miranda\\
Centro de Matem\'atica, Computa\c c\~ao e Cogni\c c\~ao \\
Universidade Federal do ABC \\
09.210-580 -- Santo Andr\'e -- 		Brazil.  \\
e-mail: colferaiv@gmail.com

\end{document}